\documentclass[10pt]{amsart}
\usepackage{amsthm}
\usepackage{amsmath}
\usepackage{enumitem}
\usepackage{mathtools}
\usepackage{moreenum}
\usepackage{amssymb,verbatim, bbold}
\usepackage{upgreek}
\usepackage{xspace,cmap}
\usepackage{csquotes}
\usepackage{stmaryrd} % to get the proper \bigtriangleup operator for diagonal intersection
\usepackage[colorlinks,citecolor=blue,urlcolor=blue,linkcolor=blue]{hyperref}

\usepackage{mathtools}

\usepackage{pst-node}
\usepackage{tikz-cd}

\renewcommand{\restriction}{\mathbin\upharpoonright}    % by default in amssymb it's mathrel
\newtheorem*{theorem*}{Theorem}
\newtheorem*{maintheorem*}{Main Theorem}
\newtheorem*{corollary*}{Corollary}
\newtheorem*{definition*}{Definition}

\newtheorem{theorem}{Theorem}[section]

\newtheorem{prop}[theorem]{Proposition}
\newtheorem{claim}{Claim}[theorem]

\newtheorem{lemma}[theorem]{Lemma}
\newtheorem{cor}[theorem]{Corollary}

\newtheorem{question}{Question}
\newtheorem{fact}[theorem]{Fact}

\theoremstyle{definition}

\newtheorem{definition}[theorem]{Definition}

\theoremstyle{remark}
\newtheorem{remark}[theorem]{Remark}

\newcommand\HOD{\textnormal{HOD}}
\newcommand\OD{\textnormal{OD}}
\newcommand\ap{\textnormal{AP}}

\newcommand\cat[1]{{}^\curvearrowright #1}
 % the default value for argument 1 is empty
 % the default value for argument 1 is empty

\newcommand{\UEP}{\vec{\mathcal{U}}\textsf{-EP}}
\newcommand{\UBP}{\vec{\mathcal{U}}\textsf{-BP}}

\newcommand*\axiomfont[1]{\textsf{\textup{#1}}}
\newcommand\zfc{\axiomfont{ZFC}}
\newcommand\zf{\axiomfont{ZF}}
\newcommand\ac{\axiomfont{AC}}
\newcommand\ad{\axiomfont{AD}}
\newcommand\dc{\axiomfont{DC}}

\newcommand\gch{\axiomfont{GCH}}
\newcommand\sch{\axiomfont{SCH}}
\newcommand\psp{\axiomfont{PSP}}
\newcommand\bp{\axiomfont{BP}}

\newcommand\AP{\axiomfont{AP}}
\newcommand\ads{\axiomfont{ADS}}

\newcommand\npt{\axiomfont{NPT}}

\newcommand\ale[1]{\marginpar{Alejandro: #1}}
\newcommand\seba[1]{\marginpar{Seba: #1}}

\DeclareMathOperator{\supp}{supp}
\DeclareMathOperator{\crit}{crit}

\DeclareMathOperator{\ob}{OB}

    \def\sq{\sqsubseteq}
    
    \newcommand{\one}{\mathop{1\hskip-3pt {\rm l}}}

\makeatletter
\newcommand{\tpitchfork}{%
  \vbox{
    \baselineskip\z@skip
    \lineskip-.52ex
    \lineskiplimit\maxdimen
    \m@th
    \ialign{##\crcr\hidewidth\smash{$-$}\hidewidth\crcr$\pitchfork$\crcr}
  }%
}
\makeatother

\def\s{\subseteq}
\def\forces{\Vdash}
		% Player~\textrm{I}
		% Player~\textrm{II}

\DeclareMathOperator{\otp}{otp}

\DeclareMathOperator{\cf}{cf}

\DeclareMathOperator{\refl}{Refl}
\DeclareMathOperator{\ord}{Ord}

\renewcommand\leq{\leqslant}

\renewcommand\geq{\geqslant}

\renewcommand{\mid}{\mathrel{|}\allowbreak}

\newcommand{\mc}{\mathop{\mathrm{mc}}\nolimits}
\newcommand{\dom}{\mathop{\mathrm{dom}}\nolimits}

\newcommand{\Col}{\mathop{\mathrm{Col}}}

\title[Combinatorics in Singular Solovay Models]{Combinatorics in Higher Solovay Models}
\author[Poveda]{Alejandro Poveda}
\address[Poveda]{Departament de Matemàtiques i Informàtica, Universitat de Barcelona, Barcelona, 08007, Catalonia, Spain. 
}
\address{Fachbereich Mathematik, Universität Hamburg, Bundesstraße 55,
Hamburg, 20146, Germany}
\email{alejandro.poveda@ub.edu}
\urladdr{www.alejandropovedaruzafa.com}

\author[Thei]{Sebastiano Thei}
\address[Thei]{IMPAN, Warsaw, Poland.}
\email{thei91.seba@gmail.com}

\subjclass[2020]{03E35, 03E55}
\keywords{Solovay model, Singular Cardinals, $\Sigma$-Prikry.}
\thanks{ The first author acknowledges the support from the Alexander von Humboldt Foundation and from project number PID2023-147428NB-I00 funded by the the Spanish Government. }
\usepackage{graphicx} % Required for inserting images

%\title{Higher Solovay Models 2.0}

\begin{document}

\maketitle

\begin{abstract}
We construe the singular-cardinal analogue of the classical Solovay model.
     Starting with large cardinal assumptions in the realm of supercompactness, we show that the our inner model captures a substantial portion of the combinatorics of $L(\mathcal{P}(\kappa))$ that are typically implied by Woodin's axiom $I_0$. %We demonstrate that several $\ad$-like consequences fulfilled by Solovay's $L(\mathbb{R})$ are replicated at the level of singular strong limit cardinals $\kappa$ of countable cofinality. 
    Among other things, we show that in our higher Solovay model there are no $\kappa^+$-sequences of distinct members of $\mathcal{P}(\kappa)$ and that Shelah's approachability property $\AP_\kappa$ fails. We prove that every set in our inner model satisfies a singular analogue of the complete Ramsey property and that the partition relation $\kappa\xrightarrow[]{\mathrm{OD}} (\omega)^\omega_{V_\mu}$ holds for all $\mu<\kappa$. 
\end{abstract}

\begin{comment}
\section{Introduction}

\textcolor{blue}{I think we want to say this: \begin{enumerate}
    \item Similarities between AD and I0
    \item Shi-Trang found more evidences of this similarities
    \item In this paper we want to analyze the possible analogies between Levy collapse and Merimovich. 
    \item In the paper \cite{DPT} some similarities in the context of DST/GDST have already been deduced. 
    \item Here we focus more on the combinatorial properties of $L(\mathcal{P}(\kappa))$.
    \item We also briefly analyze (and leave for future research) the large cardinal structure of $L(\mathcal{P}(\kappa))$.
\end{enumerate}}

\end{comment}
\section{Introduction}\label{sec: intro}

The interaction between determinacy, regularity properties and combinatorics is one
of the central themes of modern set theory. Under the Axiom of Determinacy, the
real line behaves in a remarkably regular way: every set of reals has the Perfect Set
Property, the Baire Property and is Lebesgue measurable \cite{Kan}. Since determinacy is
incompatible with the Axiom of Choice, the canonical inner model in which to study
these phenomena is $L(\mathbb R)$. A celebrated theorem of Solovay shows that,
starting from an inaccessible cardinal, one can force a model of ZFC in which every
set of reals belonging to $L(\mathbb R)$ has the regularity properties normally
associated with determinacy \cite{Sol70}. Thus Solovay's model isolates a large
part of the descriptive-set-theoretic content of $AD^{L(\mathbb R)}$ while starting
from much weaker large-cardinal assumptions; see also \cite{Kec,Kan,KoelWoo} for
background on classical descriptive set theory, determinacy and large cardinals.

\smallskip

There is a parallel story at singular cardinals. Woodin observed that the
rank-into-rank axiom $I_0(\kappa)$, asserting the existence of a non-trivial
elementary embedding
$$
        j:L(V_{\kappa+1})\to L(V_{\kappa+1})
$$
with critical point below $\kappa$, makes $L(V_{\kappa+1})$ behave in many
respects like $L(\mathbb R)$ under $AD^{L(\mathbb R)}$. Here $\kappa$ is
necessarily a singular cardinal of countable cofinality, and $V_{\kappa+1}$ plays
the role of the real line. This analogy has been substantiated by work of Woodin,
Cramer, Shi and others, showing, for instance, that under $I_0(\kappa)$ sets in the
generalized Baire space ${}^{\omega}\kappa$ lying in $L(V_{\kappa+1})$ satisfy
higher analogues of the Perfect Set Property \cite{WoodinPartII,Cramer,Shi}.

\smallskip

The analogy is not limited to descriptive set theory. In their work on $I_0$ and
combinatorics at successors of singular cardinals, Shi and Trang showed that
$L(V_{\kappa+1})$, and equivalently in this context $L(\mathcal P(\kappa))$,
satisfies a striking collection of combinatorial statements at $\kappa^+$:
failures of square-like principles, failures of scales, failures of diamond, failures
of GCH at $\kappa$, and strong reflection-like properties \cite{ShiTrang}. These
results provide further evidence that $I_0(\kappa)$ is the correct higher analogue
of determinacy for the study of the singular cardinal $\kappa$ and its successor.

The purpose of this paper is to investigate how much of this
$I_0$-combinatorics can be recovered from supercompactness alone. More precisely,
we analyze the combinatorics of the singular-cardinal Solovay model
$L(\mathcal P(\kappa)^{V[G]}),$
 where $G$ is generic for Merimovich's supercompact extender-based Prikry forcing
\cite{MerSuper}. This forcing changes the cofinality of $\kappa$ to $\omega$,
makes an inaccessible cardinal $\lambda$ become $(\kappa^+)^{V[G]}$, and supplies
a higher analogue of the classical Solovay construction \cite{DPT}. In the classical case, the
relevant forcing is the L\'evy collapse $\Col(\omega,{<}\lambda)$; in
the present singular setting, its role is played by Merimovich's forcing. One of the
guiding questions of this paper is therefore the following:

\begin{quote}
To what extent does Merimovich's forcing reproduce, at a singular cardinal
$\kappa$, the role played by the L\'evy collapse in Solovay's model?
\end{quote}

This question was addressed from the viewpoint of generalized descriptive set
theory in \cite{DPT}. There it was shown that, in the Merimovich extension, every
subset of ${}^{\omega}\kappa$ belonging to $L(\mathcal P(\kappa)^{V[G]})$ has the
$\kappa$-Perfect Set Property. In the same spirit, one also obtains higher
Baire-property analogues for the relevant Ellentuck--Prikry topology. The present
paper takes the next step; namely, it investigates  combinatorial properties of the higher Solovay model 
$L(\mathcal P(\kappa)^{V[G]})$.

Our first observation is that the $\kappa$-Perfect Set Property alone already has
strong consequences. Under $\zf+\dc_\kappa$, the assertion that every subset of
${}^{\omega}\kappa$ has the $\kappa$-Perfect Set Property rules out several
classical combinatorial objects which exist under choice. In particular, it implies
the non-existence of scales at $\kappa$, the failure of $\sch_\kappa$, the failure
of $\Diamond_{\kappa^+}$, and the non-existence of $\kappa^+$-sequences of
distinct subsets of $\kappa$. Thus the regularity property obtained in the singular
Solovay model already captures several of the consequences previously derived
from $I_0(\kappa)$ in \cite{ShiTrang}.

The second part of the paper uses the specific structure of Merimovich's forcing.
The forcing comes equipped with a rich system of subforcings and projections,
together with the interpolation and constellation lemmas developed in \cite{DPT}. These
tools play the role that factorization and homogeneity play in Solovay's original
argument. They allow us to move between intermediate generic extensions without
changing the full power set of $\kappa$, and hence to control objects definable over
$L(\mathcal P(\kappa)^{V[G]})$. This yields additional combinatorial information
which is not a purely formal consequence of the $\kappa$-Perfect Set Property.

\begin{theorem}[Main theorem]\label{thm: main theorem}
Assume $\gch$. Let $\kappa<\lambda$, where $\lambda$ is inaccessible and $\kappa$
is ${<}\lambda$-supercompact. Let $\mathbb P$ be the corresponding Merimovich's forcing  and let $G\subseteq\mathbb P$ be $V$-generic. Set
$$
        N=L(\mathcal P(\kappa)^{V[G]}).
$$
Then $V[G]$ satisfies that $\kappa$ is a strong limit singular cardinal of
cofinality $\omega$, $(\kappa^+)^{V[G]}=\lambda$, and the inner model $N$ satisfies
$\zf+\dc_\kappa$. Moreover, the following hold in $N$:
\begin{enumerate}
    \item every subset of ${}^{\omega}\kappa$ has the $\kappa$-Perfect Set
    Property;

    \item there are no scales at $\kappa$;

    \item $\sch_\kappa$ fails and $\Diamond_{\kappa^+}$ fails;

    \item there is no $\kappa^+$-sequence consisting of distinct members of
    $\mathcal P(\kappa)$;

    %\item \textcolor{red}{both $ADS_\kappa$ and $NPT(\kappa^+,\aleph_1)$ fail;}

    \item Shelah's approachability property $\AP_\kappa$ fails;

    \item there is a $\kappa$-mad family of size $\kappa$;

    \item for every $\mu<\kappa$, the definable partition relation
    $$
        \kappa \overset{\mathrm{OD}}{\longrightarrow}
        (\omega)^\omega_{V_\mu}
    $$
    holds; equivalently, every ordinal-definable coloring
    $c:[\kappa]^\omega\to V_\mu$ has an infinite homogeneous set.
\end{enumerate}
If in addition $\lambda$ is
measurable in $V$, as witnessed by a normal measure $U$, then in the model
$$
        L(\mathcal P(\kappa)^{V[G]},U)
$$
\begin{enumerate}
    \item[$(\aleph)$]  $\kappa^+$ is measurable.
    \item[$(\beth)$] There is a generic extension $V[G\ast H]$ of $V[G]$ via a $\lambda$-distributive forcing where $\mathrm{Cub}_\lambda\restriction (E^\lambda_{\mathrm{Reg}})^V$ is a $L(\mathcal{P}(\kappa),U)^{V[G\ast H]}$-ultrafilter.
\end{enumerate}
\end{theorem}

The theorem should be compared with the Shi--Trang analysis of
$L(V_{\kappa+1})$ under $I_0(\kappa)$ \cite{ShiTrang}. In their work, many of the
same conclusions are obtained from the existence of the rank-into-rank embedding.
In the present paper the source of the combinatorics is different: the negative
results about scales, $\sch$, diamond and sequences of distinct subsets of $\kappa$
follow from the $\kappa$-Perfect Set Property, while the failure of approachability
and the definable partition relation rely on the fine forcing-theoretic structure of
Merimovich's model. Thus the paper separates two mechanisms which are intertwined
under $I_0(\kappa)$: regularity properties on the one hand, and large-cardinal
strength inside $L(\mathcal P(\kappa))$ on the other.

\smallskip

The model also differs in important ways from the classical Solovay model. For
instance, Mathias \cite{HappyFamilies} asked whether infinite maximal almost disjoint families can exist
in Solovay's model, and this was answered negatively by T\"ornquist
\cite{Tornquist}. In contrast, our singular Solovay model contains
$\kappa$-mad families of size $\kappa$.

%We also investigate fragments of the large-cardinal structure of
%$L(\mathcal P(\kappa)^{V[G]})$ and related definable inner models. If the ground
%model inaccessible $\lambda$ is measurable, then adjoining the ground-model measure
%$U$ to $L(\mathcal P(\kappa)^{V[G]})$ yields a $\dc_\kappa$-model in which
%$\kappa^+$ is measurable.\seba{\tiny this is a repetition no? We already said that in the main thm above I guess} Under stronger assumptions on $\lambda$, similar
%arguments show that $\HOD_{\mathcal P(\kappa)}$ can regard $\kappa^+$ as
%J\'onsson, and, with a Ramsey hypothesis, as Ramsey. These results suggest that
%the large-cardinal structure of $L(\mathcal P(\kappa)^{V[G]})$ is rich, but they
%also leave open a number of natural questions (see \S\ref{sec: open questions}).

\smallskip

 By exploring further parallels with the classical case, we slightly strengthen the large cardinal assumption to deduce a consequence of the $\UBP$ (see Definition \ref{def: kappa baire}), a generalization of the classical Baire Property introduced by Dimonte, Motto Ros and Shi in \cite{DimonteMottoShi} and recently studied in \cite{DimonteIannella}. 
\begin{theorem}
   Suppose that  $\langle \kappa_n\mid n<\omega\rangle$ is an increasing sequence   of ${<}\lambda$-supercompact cardinals where $\lambda$ is an inaccessible cardinal above $\sup_{n<\omega}\kappa_n$. Let $\mathbb{P}^{\mathrm{diag}}$ be the \emph{Diagonal Supercompact Extender Based Prikry} defined in \cite[\S4.2]{DPT}, and let $G\subseteq\mathbb{P}^{\mathrm{diag}}$ be $V$-generic. Then \begin{equation*}
       \text{every subset of $\prod_{n<\omega}\kappa_n$ in $ L(\mathcal{P}(\kappa))^{V[G]}$ has the complete $\vec{\mathcal{U}}$-Ramsey property,}
   \end{equation*} where $\vec{\mathcal{U}}:=\langle U_n\mid n<\omega\rangle$ is the sequence of normal measures on $\kappa_n$  induced by some (fixed) elementary embedding $j_n\colon V\to M_n$ witnessing the ${<}\lambda$-supercompactness of $\kappa_n$. Moreover, in $L(\mathcal{P}(\kappa))^{V[G]}$ properties (1)--(7) of Theorem \ref{thm: main theorem} hold.
\end{theorem}

The paper is organized as follows. In Section~\ref{sec: Merimovich forcing} we collect the relevant
preliminaries about Merimovich's forcing
\cite{MerSuper} and Generalized Descriptive Set Theory, and prove the combinatorial consequences of the $\kappa$-Perfect Set Property ($\S\ref{sec: combinatorial consequences of PSP}$). Section~\ref{sec: Combinatorics in Merimovich} turns to arguments specific to Merimovich's forcing. We prove that
$\AP_\kappa$ fails in $L(\mathcal P(\kappa)^{V[G]})$, construct $\kappa$-mad
families there, and establish the ordinal-definable partition relation
$\kappa \overset{\mathrm{OD}}{\longrightarrow}
        (\omega)^\omega_{V_\mu}$ for all $\mu<\kappa$.
We also prove a higher Ellentuck--Prikry Ramsey theorem in the diagonal Merimovich setting \cite{DPT},
analyze the J\'onsson and Ramsey behavior of $\HOD_{\mathcal P(\kappa)}$, and show
that adding a ground-model normal measure $U$ yields a model
$L(\mathcal P(\kappa)^{V[G]},U)$ in which $\kappa^+$ is measurable. Finally, Section~\ref{sec: open questions} collects a few open questions.

\section{Preliminaries}\label{sec: Merimovich forcing}

\subsection{Review on generalized descriptive set theory}\label{sec: gdst}

%\subsection{Dependent Choice}

Classical Descriptive Set Theory \cite{Kec} studies the properties of definable subsets of Polish spaces, with the real line \(\mathbb{R}\) and the Baire space ${}^\omega\omega$ serving as paradigmatic examples. A number of results in the area show that simply definable sets  (e.g., \emph{Borel} or \emph{analytic} sets) do possess a rich canonical structure theory. 
\emph{Generalized Descriptive Set Theory} ($\axiomfont{GDST}$) is preoccupied with the study of definable objects beyond the continuum. Specifically, $\axiomfont{GDST}$'s primary interest is the  study of definable sets in higher function spaces like $^\kappa 2$ and $^{\cf(\kappa)} \kappa$ (see \cite{Friedman}, \cite{DimMot} and \cite[\S2]{DPT}). It turns out that when $\kappa$ is an infinite cardinal with $\cf(\kappa)=\omega$, there are a number of results that parallel the findings of classical descriptive set theory. For instance, the following is a higher singular version of Solovay's fundamental theorem \cite{solovay1970model}:
\begin{theorem}[\cite{DPT}]\label{PSP in Solovay}
    Let $G$ be $\mathbb{P}$-generic over $V$, where $\mathbb{P}$ is the Merimovich forcing defined in $\S\ref{subsec: Meromovich}$. Then every subset of ${}^\omega\kappa$ in $L(\mathcal{P}(\kappa)^{V[G]})$ has the $\kappa$-$\psp$.
\end{theorem}
 Recall that a set $A\s {}^\omega \kappa$ has the $\kappa$\emph{-Perfect Set Property} (briefly, $\kappa$\emph{-\textsf{PSP}}) if either $|A|\leq\kappa$, or there exists a topological embedding from $^\kappa 2$ to $A$, closed-in-${}^\omega\kappa$. If $\kappa$ is a strong limit singular cardinal of countable cofinality, then a set $A\s {}^\omega \kappa$ has the $\kappa$-$\psp$ if and only if either $|A|\leq\kappa$ or there is a continuous injection $\iota:{}^\kappa 2\rightarrow{}^\omega\kappa$ with $\iota``({}^\kappa2)\s A$ (see \cite[Corollary 7.1.3]{DimMot}). Notice that letting $\kappa=\aleph_0$ one recovers the usual $\psp$. In future sections we  explore the consequences of the $\kappa$-$\psp$ upon the combinatorics of $L(\mathcal{P}(\kappa)^{V[G]})$.

\smallskip

The base axiomatic theory to develop $\axiomfont{GDST}$ is $\zf+\dc_\kappa$.  Given an infinite cardinal $\kappa$ the axiom of \emph{Dependent Choice} at $\kappa$ ($\dc_\kappa$) is the following statement: \begin{equation}
    \tag{$\dc_\kappa$}\forall X \, \forall F \colon {}^{<\kappa}X \to \mathcal{P}(X) \setminus \{\emptyset\} \; \exists g \colon \kappa \to X \; \forall \gamma < \kappa \; g(\gamma) \in F(g \restriction \gamma). 
\end{equation}
As anticipated, our models of interest are of the form $L(\mathcal{P}(\kappa))$. Accordingly, we shall argue that they always satisfy $\dc_\kappa$. (Notice that they  satisfy $\zf$, as well.)
\begin{lemma}\label{coding into subsets of kappa}
    $H_{\kappa^+}\s L(\mathcal{P}(\kappa))$, for all cardinals $\kappa$.
\end{lemma}
\begin{proof}
    If $x\in H_{\kappa^+}$, then the transitive closure of $\{x\}$  has cardinality $\eta$ for some cardinal $\eta\leq\kappa$. Let $f\colon \eta\rightarrow\mathrm{tcl}(\{x\})$ be a bijection, and define $\triangleleft\subseteq\eta\times\eta$ as $\text{$\xi\triangleleft\delta:\Longleftrightarrow f(\xi)\in f(\delta)$.}$ Note that $\triangleleft$ can be coded as a subset of $\eta\leq\kappa$ (e.g., using G{\"o}del's pairing function) and therefore $(\eta,\triangleleft)\in L(\mathcal{P}(\kappa)).$ If we compute  the transitive collapse of $(\eta,\triangleleft)$ (in $L(\mathcal{P}(\kappa))$) it has to take the form $(\mathrm{tcl}(\{x\}),\in)$ and thus,  by transitivity, $x\in L(\mathcal{P}(\kappa))$.
\end{proof}
The above lemma will help us show  that $\dc_\kappa$ is downward absolute from $V$ to $L(\mathcal{P}(\kappa))$. The next argument is inspired by arguments presented in \cite{DimonteRankIntoRank}:
\begin{prop}[$\zf$]\label{prop: DC}
If $\dc_\kappa$ holds then $L(\mathcal{P}(\kappa))\models \dc_\kappa$.
\end{prop}
\begin{proof}
    We have to prove that
$$ \forall X \, \forall F \colon {}^{<\kappa}X \to \mathcal{P}(X) \setminus \{\emptyset\} \; \exists g \colon \kappa \to X \; \forall \gamma < \kappa \; g(\gamma) \in F(g \restriction \gamma) $$ holds in $L(\mathcal{P}(\kappa))$. First we verify $\dc_\kappa(\mathcal{P}(\kappa))$; i.e., $\dc_\kappa$ for $X=\mathcal{P}(\kappa)$. To see this recall that $\dc_\kappa$ holds in $V$ and so for all $F \colon {}^{<\kappa}\mathcal{P}(\kappa) \to \mathcal{P}(\mathcal{P}(\kappa)) \setminus \{\emptyset\}$ there is $ g \colon \kappa \to \mathcal{P}(\kappa) $ with $\forall \gamma < \kappa \; g(\gamma) \in F(g \restriction \gamma)$. But $g\in H_{\kappa^+}$ and so Lemma \ref{coding into subsets of kappa} yields $g\in L(\mathcal{P}(\kappa))$. Now we prove $\dc_\kappa(\alpha \times \mathcal{P}(\kappa))$ for every ordinal $\alpha$. Accordingly pick $F \colon {}^{<\kappa}(\alpha\times\mathcal{P}(\kappa)) \to \mathcal{P}(\alpha\times\mathcal{P}(\kappa)) \setminus \{\emptyset\}$, and for each $s \in {}^{<\kappa}(\alpha \times \mathcal{P}(\kappa))$, stipulate $m(s):=\min\{\gamma\in\ord\mid\exists x\in \mathcal{P}(\kappa)\; (\gamma, x)\in F(s)\}$, and let $\pi_2 : {}^{<\kappa}(\alpha \times \mathcal{P}(\kappa)) \to {}^{<\kappa}\mathcal{P}(\kappa)$ be the projection mapping $\langle (\gamma_\xi,x_\xi) \mid \xi < \nu \rangle$ to $\langle x_\xi \mid \xi < \nu \rangle$. We construct by induction a function $c:{}^{<\kappa}\mathcal{P}(\kappa)\to {}^{<\kappa}(\alpha \times \mathcal{P}(\kappa))$ such that $\pi_2(c(t)) \subseteq t$ for all $t\in {}^{<\kappa}\mathcal{P}(\kappa)$. If $t=\emptyset$, stipulate $c(t):=\emptyset$. If $t=\langle x_\xi \mid \xi < \nu \rangle \in {}^{<\kappa}\mathcal{P}(\kappa)$ and for each $\xi<\nu$ the image $c(t\restriction\xi)$ is defined, define $\bar{\nu}:=\min\{\eta<\nu\mid\forall\gamma<\alpha\; (\gamma,x_\eta)\notin F(c(t\restriction\eta))\}$ and $\gamma_\xi:=\min\{\gamma<\alpha\mid (\gamma,x_\xi)\in F(c(t\restriction\xi))\}$ for each $\xi<\bar{\nu}$. Stipulate $c(t):=\langle (\gamma_\xi, x_\xi) \mid \xi < \bar{\nu} \rangle$, where $\gamma_\xi := \min \{\gamma \mid (\gamma, x_\xi) \in F(c(t \restriction \xi))\}.$ Let $G:{}^{<\kappa}\mathcal{P}(\kappa) \to \mathcal{P}(\mathcal{P}(\kappa)) \setminus \{\emptyset\}$ defined as
$$ G(t) := \{x \in \mathcal{P}(\kappa) \mid (m(c(t)), x) \in F(c(t))\}.$$ By $\dc_\kappa(\mathcal{P}(\kappa))$ there is $g : \kappa \to \mathcal{P}(\kappa)$ in $L(\mathcal{P}(\kappa))$ such that for all $\beta < \kappa$, $g(\beta) \in G(g \restriction \beta)$. Now let $f : \kappa \to \alpha \times \mathcal{P}(\kappa)$ be defined by recursion as $f(\beta) := (m(f \restriction \beta), g(\beta))$. 
\begin{claim}
    $f(\beta) \in F(f \restriction \beta)$ for every $\beta < \kappa$.
\end{claim}
\begin{proof}[Proof of claim]
    We prove by induction that $f \restriction \beta = c(g \restriction \beta)$. Suppose that for every $\xi < \beta$, $f \restriction \xi = c(g \restriction \xi)$. By definition $c(g \restriction \beta) = \langle (\gamma_\xi, g(\xi)) \mid \xi < \bar{\beta} \rangle$, where $\gamma_\xi := \min\{\gamma \mid (\gamma, g(\xi)) \in F(c(g \restriction \xi))\}$.
So $(\gamma_\xi, g(\xi)) \in F(c(g \restriction \xi))$. But $g(\xi) \in G(g \restriction \xi)$, so by definition of $G$, it holds that $(m(c(g \restriction \xi)), g(\xi)) \in F(c(g \restriction \xi))$.
Therefore, $\gamma_\xi = m(c(g \restriction \xi))$ and
$$ f(\xi) = (m(f \restriction \xi), g(\xi)) = (m(c(g \restriction \xi)), g(\xi)) = (\gamma_\xi, g(\xi)) = c(g \restriction \beta)(\xi). $$
So $f \restriction \beta = c(g \restriction \beta)$ and, since for every $\xi$, $(\gamma_\xi, g(\xi)) \in F(c(g \restriction \xi))$, we conclude $f(\beta) \in F(f \restriction \beta)$.
\end{proof}

Finally, let $X \in L(\mathcal{P}(\kappa))$. Fix a definable surjection $\Phi$ from $\ord\times\mathcal{P}(\kappa)$ onto $L(\mathcal{P}(\kappa))$, and let $\alpha$ be such that $\Phi``(\alpha \times \mathcal{P}(\kappa)) \supseteq X$. Let $F : {}^{<\kappa}X \to \mathcal{P}(X) \setminus \{\emptyset\}$. For $t = \langle (\gamma_\xi, x_\xi) \mid \xi < \nu \rangle \in {}^{<\kappa}(\alpha \times \mathcal{P}(\kappa))$, stipulate $c(t) := \langle \Phi(\gamma_\xi, x_\xi) \mid \xi < \bar{\nu} \rangle \in {}^{<\kappa}X$, where $\bar{\nu}$ is the minimum such that $\Phi(\gamma_{\bar{\nu}}, x_{\bar{\nu}}) \notin X$. Define $G \colon {}^{<\kappa}(\alpha \times \mathcal{P}(\kappa)) \to \mathcal{P}(\alpha \times \mathcal{P}(\kappa)) \setminus \{\emptyset\}$
as $$ G(t) := \{(\gamma, x) \mid \Phi(\gamma, x) \in F(c(t))\}.$$ By $\dc_\kappa(\alpha \times \mathcal{P}(\kappa))$ there is $g : \kappa \to \alpha \times \mathcal{P}(\kappa)$ in $L(\mathcal{P}(\kappa))$ such that $g(\beta) \in G(g \restriction \beta)$ for every $\beta < \kappa$. Then $f = \Phi \circ g$ is as we wanted, because for every $\beta < \kappa$, $\Phi(g(\beta)) \in F(c(g \restriction \beta))$, and as above we can prove that $c(g \restriction \beta) = f \restriction \beta$.

So $\dc_\kappa(X)$ holds in $L(\mathcal{P}(\kappa))$, whenever $X \in L(\mathcal{P}(\kappa))$. Hence, $L(\mathcal{P}(\kappa))\models\dc_\kappa$.
\end{proof}
%The theory $\zf+\dc_\kappa$  will grant that the classical  proofs go smoothly in the generalized context.
%\begin{definition}[{\cite{DimonteMotto}}]
 %   For any cardinal $\kappa$, a topological space $\mathcal{X}$ is called \emph{$\kappa$-Polish} in case it is homeomorphic to a completely metrizable space with weight $\kappa$.
%\end{definition}
%Given a non-empty set $X$ and an infinite cardinal $\kappa$, the set ${}^\kappa X$ of all functions from $\kappa$ to $X$ will be equipped with the so-called \emph{bounded topology}; namely, the topology whose basic open neighborhoods are $N_{\xi, s}:=\{x\in{}^\kappa X\mid x\restriction\xi=s\}$, where $\xi<\kappa$ and $s\in{}^\xi X.$ In case $\kappa=\omega$ this coincides with the product  of the discrete topologies in $X.$ If $\kappa$ is a strong limit cardinal of countable cofinality, then the \emph{Generalized Cantor Space} ${}^\kappa 2$ and the \emph{Generalized Baire Space} ${}^\omega\kappa$ are examples of $\kappa$-Polish spaces. 

Now we briefly discuss the higher analogue of the \emph{Baire property}. A topological space $\mathcal{X}$ is a  \emph{Baire space} if every non-empty open subset of $\mathcal{X}$ is not meager. A set  $A\s \mathcal{X}$ has the \emph{Baire Property} ($\bp$) if there is an open set $O\s \mathcal{X}$ such that $A\triangle O$ is meager. Dimonte--Motto Ros--Shi \cite{DimonteMottoShi} define the notion of a \emph{$\kappa$-Baire space} and $\kappa$-\emph{Baire Property} ($\kappa$-$\bp$) by replacing ``meager'' by $``\kappa$-meager'' (i.e., a union of $\kappa$-many nowhere dense sets) in the classical definition.

\smallskip

The notion of a $\kappa$-Baire space is meaningful for general topological spaces but it does not fit well with the canonical $\kappa$-Polish topologies. For instance, suppose that $\langle\kappa_n\mid n<\omega\rangle$ is an increasing sequence of regular cardinals, set $\kappa:=\sup_{n<\omega}\kappa_n$, and consider the product space $\prod_{n<\omega}\kappa_n$. As it turns out, this is \textbf{not} a $\kappa$-Baire space in its $\kappa$-Polish topology because it is the union of $\omega_1$-many nowhere dense sets \cite{DimonteMottoShi}.\footnote{For each $\alpha<\omega_1$, set $U_\alpha=\{x\in \prod_{n<\omega}\kappa_n\mid\exists n<\omega\, x(n)=\alpha\}$ and note that $\prod_{n<\omega}\kappa_n=\bigcup_{\alpha<\omega_1}(\prod_{n<\omega}\kappa_n\setminus U_\alpha)$. Since the $U_\alpha$'s are dense open, the conclusion follows.} To circumvent this issue another route is outlined in \cite{DimonteMottoShi}. Namely, the authors consider $\prod_{n<\omega}\kappa_n$ with a topology auxiliary to its natural product topology; namely, the $\vec{\mathcal{U}}$-\emph{Ellentuck-Prikry} topology ($\UEP$). %While we will insist on $C(\Sigma)$ retaining its $\kappa$-Polish topology (in fact, $C(\Sigma)$ is not $\kappa$-Polish in the $\UEP$ topology) it is in the $\UEP$ topology where the $\kappa$-$\bp$ for subsets  $A\s C(\Sigma)$ will be formulated. This is because $C(\Sigma)$ is a $\kappa$-Baire space with respect to the auxiliary $\UEP$ topology \cite{DimonteMottoShi}.

The choice of the $\UEP$ topology is inspired by the fact that in the classical setting the product topology of the Baire space ${}^\omega\omega$ is homeomorphic to the topology of maximal filters on Cohen forcing.\footnote{This is the topology generated by the open sets  $\mathcal{N}_p=\{F\in \beta\omega\mid \supp(p)\in F\}$ where $\supp(p)$ is the support of a condition $p\colon \omega\rightarrow 2$ in Cohen forcing $\mathrm{Add}(\omega,1).$}
The analogy in the singular case is provided by Magidor's \emph{Diagonal Prikry forcing} (Definition \ref{def: diagonal prikry} below).

\smallskip

Hereafter we assume that $\vec{\mathcal{U}}=\langle \mathcal{U}_n\mid n<\omega\rangle$ is a sequence of normal (non-principal) ultrafilters on the members $\kappa_n$ of an increasing sequence $\Sigma=\langle \kappa_n\mid n<\omega\rangle$. We shall set $\kappa:=\sup(\Sigma)$. Recall $C(\Sigma)$ denotes the $\kappa$-Polish space $\prod_{n<\omega}\kappa_n.$

\begin{definition}[Diagonal Prikry forcing (Magidor)]\label{def: diagonal prikry}
A condition in the \emph{Diagonal Prikry forcing with $\vec{\mathcal{U}}$} (in symbols, $\mathbb{P}(\vec{\mathcal{U}})$) is a sequence $$p=\langle \alpha^p_0,\dots, \alpha^p_{\ell(p)-1}, A^p_{\ell(p)}, A^p_{\ell(p)+1},\dots \rangle$$ where
$s^p=\langle \alpha^p_0,\dots, \alpha^p_{\ell(p)-1}\rangle\in \prod_{n<\ell(p)}\kappa_n$ is strictly increasing,  $A^p_n$ belongs to $\mathcal{U}_n$ for $n\geq \ell(p)$ and every $\beta\in A_{n+1}^p$ is bigger than $\kappa_n$. 

Given conditions $p,q\in \mathbb{P}(\vec{\mathcal{U}})$ write $p\leq q$ if $s^p$ end-extends $s^q$ (in symbols,  $s^p\sq s^q$), $s^p(n)\in A^q_n$ for $n\in [\ell(q),\ell(p))$ and $A^p_n\s A^q_n$ for all $n\geq \ell(p).$
\end{definition}
\begin{definition}[The $\UEP$ topology]\label{def: ellentuck prikry}\hfill
   \begin{enumerate}
    \item For each $x\in \prod_{n<\omega}\kappa_n$ we consider the  filter $\mathcal{F}_x\s \mathbb{P}({\vec{\mathcal{U}}})$ defined as
    $$\mathcal{F}_x:=\{p\in \mathbb{P}({\vec{\mathcal{U}}})\mid s^p\sqsubseteq x\,\wedge\,\forall n\geq \ell(p)\, (x(n)\in A^p_n)\}$$
and we say that \emph{$x$ is $\mathbb{P}(\vec{\mathcal{U}})$-generic}  if $\mathcal{F}_x$ is a $\mathbb{P}(\vec{\mathcal{U}})$-generic filter.  
       \item For each condition $p\in \mathbb{P}({\vec{\mathcal{U}}})$, define $$\textstyle \mathcal{N}_p:=\{x\in\prod_{n<\omega}\kappa_n\mid p\in\mathcal{F}_x\}.$$ 
       \item  The \emph{$\vec{\mathcal{U}}$-Ellentuck-Prikry}  ($\UEP$) topology $\mathcal{T}_{\UEP}$ is the topology which has  $\{\mathcal{N}_p\mid p\in\mathbb{P}({\vec{\mathcal{U}}})\}$ as basic open sets. 
   \end{enumerate}
\end{definition}

\begin{definition}[$\vec{\mathcal{U}}$-Baire Property]\label{def: kappa baire}
   A set $A\subseteq C(\Sigma)$ has the \emph{$\vec{\mathcal{U}}$-Baire Property} ($\UBP$) if it has the $\kappa$-$\bp$ as a subset of the topological space $(C(\Sigma), \mathcal{T}_{\UEP})$; namely, if  there is a $\UEP$-open set $O\subseteq C(\Sigma)$ such that $A\Delta O$ is $\kappa$-meager in  $\mathcal{T}_{\UEP}$. 
\end{definition}

\begin{fact}\hfill
\begin{enumerate}
      \item (\cite{DimonteMottoShi})  $(C(\Sigma), \mathcal{T}_{\UEP})$ is a $\kappa$-Baire space.  
    \item (\cite{DimonteMottoShi})  All the $\kappa$-analytic subsets of $\prod_{n<\omega}\kappa_n$ have the $\vec{\mathcal{U}}$-$\bp$.
    \item (\cite{DimonteIannella})  There exists a subset of $\prod_{n<\omega}\kappa_n$ without the $\vec{\mathcal{U}}$-$\bp$.
\end{enumerate}
\end{fact}
In \cite[\S4.2]{DPT}, Dimonte--Poveda--Thei constructed a model of the form $L(\mathcal{P}(\kappa))$, with $\kappa=\sup_{n<\omega}\kappa_n$, where every set $A\s \prod_{n<\omega}\kappa_n$ has the $\UBP$.

\smallskip

A related  property is the \emph{Completely $\vec{\mathcal{U}}$-Ramsey property}:
\begin{definition}[Complete $\vec{\mathcal{U}}$-Ramsey property]
    A set $A \s \prod_{n<\omega\omega}\kappa_n$ has the complete $\vec{\mathcal{U}}$-Ramsey property if for every $p\in \mathbb{P}(\vec{\mathcal{U}})$ there is $q\leq^* p$ such that  $$\text{$\mathcal{N}_q\s A$ or $\mathcal{N}_q\cap A=\emptyset$.}$$
\end{definition}
The above is the $\mathcal{T}_{\UEP}$-version of the classical \emph{Complete Ramsey Property} in  $([\mathbb{N}]^\infty,\mathcal{T}_{\mathrm{EP}})$, where $\mathcal{T}_{\mathrm{EP}}$ denotes the Ellentuck--Prikry topology \cite{Ellentuck}. 

Mathias in \cite{MathiasRamsey} and Silver in \cite{SilverAnalytic} have showed that every analytic set has the Complete Ramsey property. In Section~\ref{sec: the complete Ramsey} we will extend this to the realm of singular cardinals by showing that in the model of \cite[\S4.2]{DPT} (i.e., in the $L(\mathcal{P}(\kappa))$ of the generic extension by the Diagonal Supercompact Extender Based Prikry) every subset of $\prod_{n<\omega}\kappa_n$ has the complete $\vec{\mathcal{U}}$-Ramsey property.

\subsection{Review of Merimovich's forcing}\label{subsec: Meromovich}
In this section we briefly review  Merimovich's forcing from \cite{MerSuper} utilizing  the exposition provided in \cite[\S4]{DPT}.\footnote{The main difference with respect to \cite{DPT} is that here we use trees in place of measure one sets in the definition of Merimovich's forcing. This is useful when showing that there are projections between $\mathbb{P}$ and its various subforcings. } 

In what follows we assume the GCH  and   
that $j\colon V\rightarrow M$ is an elementary embedding  with $\crit(j)=\kappa$ and $M^{<\lambda}\s M$, where $\lambda$ is an inaccessible cardinal above $\kappa$.
\begin{definition}[Domains]
    A \emph{domain} is a set $d\in [\lambda\setminus \kappa]^{<\lambda}$ with $\kappa=\min(d)$.  The collection of all domains will be denoted by $\mathcal{D}^\ast:=\mathcal{D}(\kappa,\lambda)$.
\end{definition}

%Given  $d\in \mathcal{D}^\ast$ there is a $\kappa$-complete ultrafilter $E(d)$ attached to $d$. This ultrafilter does not concentrate on a set of ordinals, but rather on the set of \emph{$d$-objects}, which is introduced in the next definition:
\begin{definition}[$d$-object]
    Let $d\in\mathcal{D}^\ast$. A function $\nu\colon \dom(\nu)\rightarrow\kappa$ is called a \emph{$d$-object} if it fulfills the following requirements; namely,
    \begin{enumerate}
        \item $\kappa\in\dom(\nu)\s d$ and $\nu(\kappa)$ is an inaccessible cardinal;
        \item $\nu(\alpha)<\nu(\beta)$ for each $\alpha<\beta$ in $\dom(\nu)$;
      
    \end{enumerate}
    The set of $d$-objects will be denoted by $\ob(d)$. Given $\nu,\mu\in \ob(d)$ we write $\nu\prec \mu$ if $\dom(\nu)\s \dom(\mu)$ and $\nu(\alpha)<\mu(\kappa)$ for all $\alpha\in\dom(\nu)$. 
\end{definition}

The definition of a $d$-object embodies the main features of $\mc(d)$ (the \emph{maximal coordinate} of $d$) in the  $M$-side of the master embedding $j$:
$$\mc(d):=\{\langle j(\alpha),\alpha\rangle\mid \alpha\in d\}.$$

\begin{definition}[Ultrafilters on $\ob(d)$]
  Given $d\in \mathcal{D}^\ast$ define $$E(d):=\{X\s \ob(d)\mid \mc(d)\in j(X)\}.$$ 
\end{definition}
\begin{remark}
    A few data points about $E(d)$. First, $E(d)$ is a $\kappa$-complete ultrafilter, yet not necessarily normal. Second, given domains $d\s e$ there is a natural projection between $\ob(e)$ and $\ob(d)$ given by $\nu\mapsto \nu\restriction d$. This in turn induces a \emph{Rudin-Keisler projection} between $E(e)$ and $E(d)$ which we  denote by $\pi_{e,d}$ or $\restriction d$ (if $e$ is clear from the context). 
\end{remark}

\begin{definition}
	Let $d\in\mathcal{D}^\ast$. A tree $T\s\ob(d)^{<\omega}$ is called an $E(d)$-tree if it consists of $\prec$-increasing sequences of $d$-objects and for each $\vec\nu\in T$, $$\mathrm{Succ}_T(\vec\nu):=\{\mu\in\ob(d)\mid \vec\nu{}^\smallfrown\langle \mu\rangle\in T\}\in E(d).$$
	Given an $E(d)$-tree $T$ and a sequence of $d$-objects $\vec\nu\in T$, denote
	$$T_{\vec\nu}:=\{\vec\eta\in\ob(d)^{<\omega}\mid \vec\nu{}^\smallfrown\vec\eta\in T\}.$$
\end{definition}

\begin{definition}[\cite{MerSuper}]
   The poset $\mathbb{P}$  consists of pairs  
    $p=\langle f,T\rangle$
    where:
    \begin{enumerate}

        \item $f\colon \dom(f)\rightarrow{}^{<\omega}\kappa$ is a function with $\dom(f)\in\mathcal{D}^\ast$ and
        $$f(\alpha)=\langle f_0(\alpha),\dots, f_{|f(\alpha)|-1}(\alpha)\rangle$$
        is increasing, for all $\alpha\in\dom(f)$.
        \item $T$ is an $E(\dom(f))$-tree. Furthermore,  for each $\langle \nu\rangle\in T$
        \begin{itemize}
            \item  $\nu(\kappa)>\sup(\text{ran}(c_{n-1}))$;
            \item and $\nu(\kappa)>\max(f(\alpha))$ for all $\alpha\in\dom(\nu)$.
        \end{itemize}

    \end{enumerate}
    Given $p\in\mathbb{P}$ its \emph{length} (denoted $\ell(p)$) is the integer $n$.

    \smallskip

    Following Merimovich \cite{MerSuper} we denote by $\mathbb{P}^*$ the poset consisting  of the functions $f$ from item (1) ordered by $\supseteq$-inclusion.
\end{definition}

\begin{definition}[Pure extensions]
    Given 
      $p=\langle f^p,T^p\rangle$ and
        $q=\langle f^q,T^q\rangle$ in $\mathbb{P}$
        we write $q\leq^* p$ whenever   $f^p\s f^q$ and $T^q\restriction \dom(f^p)\s T^p$, where  $$T^q\restriction\dom(f^p):=\{\langle \nu_0\restriction\dom(f^p), \dots, \nu_m\restriction\dom(f^p)\rangle \mid \langle \nu_0,\dots,\nu_m\rangle \in T^q\}.$$
\end{definition}

\begin{definition}[One point extensions]
Given a condition 
 $p=\langle f,T\rangle$
and $\langle \nu\rangle\in T$, the \emph{one-point extension of $p$ by $\langle \nu\rangle$} (in symbols, $p\cat\langle\nu\rangle$) is
$$\langle f_{\langle \nu\rangle},T_{\langle \nu\rangle}\rangle$$
where 
$$f_{\langle\nu\rangle}:=\begin{cases}
   f(\alpha){}^\smallfrown \langle\nu(\alpha)\rangle, & \text{if $\alpha\in\dom(\nu)$,}\\
   f(\alpha), & \text{otherwise,}
\end{cases}
$$
and $T_{\langle\nu\rangle}:=\{\vec\eta\in \ob(\dom(f))^{<\omega}\mid \langle\nu\rangle^\smallfrown \vec\eta\in T\}.$

Given a $\prec$-increasing sequence of objects $\vec\nu\in A^{<\omega}$ one defines $p\cat\vec\nu$ by recursion on the length of $|\vec\nu|$ setting as a base case $p\cat\varnothing:=p$.
\end{definition}
\begin{fact}
    For each $p\in\mathbb{P}$ and $\vec\nu$ a $\prec$-increasing sequence of objects in the tree of $p$, $p\cat\vec\nu$ is a legitimate condition in $\mathbb{P}$. \qed
\end{fact}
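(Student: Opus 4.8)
The plan is to verify, by induction on $\lh(\vec\nu)$, that $p\cat\vec\nu$ satisfies both clauses in the definition of a condition of $\mathbb{P}$. The base case $\vec\nu=\varnothing$ is immediate, as $p\cat\varnothing=p$. For the inductive step write $\vec\nu=\vec\sigma{}^\smallfrown\langle\mu\rangle$; then $p\cat\vec\nu=(p\cat\vec\sigma)\cat\langle\mu\rangle$ by the recursive definition of the iterated one-point extension, where $q:=p\cat\vec\sigma$ is a condition by the inductive hypothesis, its tree is $T^q=T^p_{\vec\sigma}$, and $\langle\mu\rangle\in T^q$ because $\vec\sigma{}^\smallfrown\langle\mu\rangle\in T^p$. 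So it suffices to show that a single one-point extension $q\cat\langle\mu\rangle=\langle f^q_{\langle\mu\rangle},T^q_{\langle\mu\rangle}\rangle$ of a condition $q$ is again a condition.

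For clause (1), note that the domain of $f^q_{\langle\mu\rangle}$ is $\dom(f^q)\in\mathcal{D}$, since passing from $f^q$ to $f^q_{\langle\mu\rangle}$ only lengthens some of the values and leaves the domain unchanged. For $\alpha\in\dom(\mu)$ we have $f^q_{\langle\mu\rangle}(\alpha)=f^q(\alpha){}^\smallfrown\langle\mu(\alpha)\rangle$, which is still increasing because $\mu(\alpha)\geq\mu(\kappa)>\max(f^q(\alpha))$: the first inequality holds as $\mu$ is increasing on $\dom(\mu)$ with $\kappa=\min(\dom(\mu))$, and the second is exactly clause (2) for $q$ applied to $\langle\mu\rangle\in T^q$. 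For $\alpha\notin\dom(\mu)$, $f^q_{\langle\mu\rangle}(\alpha)=f^q(\alpha)$ is increasing by hypothesis.

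For clause (2), observe first that $T^q_{\langle\mu\rangle}$ consists of $\prec$-increasing sequences of objects in $\ob(\dom(f^q))$, inherited from $T^q$ since a tail of a $\prec$-increasing sequence is $\prec$-increasing. For every $\vec\eta\in T^q_{\langle\mu\rangle}$ one has $\mathrm{Succ}_{T^q_{\langle\mu\rangle}}(\vec\eta)=\mathrm{Succ}_{T^q}(\langle\mu\rangle{}^\smallfrown\vec\eta)\in E(\dom(f^q))$, using that $\langle\mu\rangle{}^\smallfrown\vec\eta\in T^q$ (and $\langle\mu\rangle\in T^q$ when $\vec\eta=\varnothing$); hence $T^q_{\langle\mu\rangle}$ is an $E(\dom(f^q))$-tree. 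Finally, for a level-one node $\langle\eta\rangle\in T^q_{\langle\mu\rangle}$ — equivalently $\langle\mu,\eta\rangle\in T^q$ — the $\prec$-increasingness of $\langle\mu,\eta\rangle$ gives $\mu\prec\eta$, hence $\eta(\kappa)>\mu(\alpha)=\max(f^q_{\langle\mu\rangle}(\alpha))$ for all $\alpha\in\dom(\mu)$; and for $\alpha\in\dom(\eta)\setminus\dom(\mu)$ we have $f^q_{\langle\mu\rangle}(\alpha)=f^q(\alpha)$, and $\eta(\kappa)>\max(f^q(\alpha))$ holds because the $E(d)$-trees occurring in conditions are coherent, i.e. the bound in clause (2) propagates down the tree: at any node $\vec\eta{}^\smallfrown\langle\rho\rangle\in T$ the object $\rho$ has $\rho(\kappa)$ above the stem obtained by extending $f$ along $\vec\eta$. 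This coherence is part of Merimovich's forcing and is precisely what keeps one-point extensions inside $\mathbb{P}$; at bottom it rests on the elementary fact that $\{\eta\in\ob(d)\mid\eta(\kappa)>\gamma\}\in E(d)$ for every $\gamma<\kappa$. The requirement on the last column of $f^q_{\langle\mu\rangle}$ is checked the same way.

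The step I expect to demand the most care is this last one — the domination part of clause (2) for the new tree at coordinates $\alpha\notin\dom(\mu)$ — since it is not literally an instance of clause (2) for $q$, but rather of the coherence of the trees in Merimovich's forcing (clause (2) propagating to all nodes of the tree, relative to the corresponding partial stems). Once that coherence is made precise, everything else — in particular the outer induction on $\lh(\vec\nu)$, via $p\cat(\vec\sigma{}^\smallfrown\langle\mu\rangle)=(p\cat\vec\sigma)\cat\langle\mu\rangle$ and $T^{p\cat\vec\sigma}=T^p_{\vec\sigma}$ — is routine.
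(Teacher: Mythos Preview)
The paper provides no proof of this fact: it is stated with an immediate \qed, so there is nothing to compare your approach against. Your inductive verification via iterated one-point extensions is the natural and standard way to check this, and the argument is essentially correct.

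You are right to flag the domination condition for $\alpha\in\dom(\eta)\setminus\dom(\mu)$ as the one point requiring care. Note, however, that your resolution is slightly circular as written: you invoke a ``coherence'' property --- that clause~(2) propagates to all levels of the tree relative to the partial stems --- but this is precisely what the Fact asserts once unwound. Strictly from the paper's definition, which constrains only level-one nodes $\langle\nu\rangle\in T$, the required inequality $\eta(\kappa)>\max(f^q(\alpha))$ for such $\alpha$ is not literally forced. In Merimovich's original presentation this is handled either by stating the domination requirement at every node of the tree (relative to the stem up to that node), or equivalently by observing that one may always shrink the tree so that it holds, since $\{\eta\in\ob(d)\mid\eta(\kappa)>\gamma\}\in E(d)$ for each $\gamma<\kappa$ --- exactly the fact you cite. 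The paper's abbreviated definition (note also the orphaned $c_{n-1}$ in clause~(2)) is evidently importing this convention implicitly. So your intuition is correct; just be aware that the step does not follow from the definitions exactly as printed here, but from the intended reading of them.
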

\begin{definition}[The main ordering]
    Given conditions $p,q\in\mathbb{P}$ we write $q\leq p$ if there is a $\prec$-increasing sequence of objects $\vec\nu$ in the tree of $p$ such that $q\leq^*p\cat\vec\nu$.
\end{definition}

\begin{theorem}[{Essentially \cite{MerSuper}, see also \cite[Lemma~4.5]{DPT}}]\label{lemma: PE SigmaPrikry}\hfill

 $\mathbb{P}$ is a $\Sigma$-Prikry poset taking $\Sigma:=\langle \kappa\mid n<\omega\rangle$. \qed
\end{theorem}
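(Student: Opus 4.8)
The plan is to check that $(\mathbb{P},\ell,c)$ satisfies the axioms of a $\Sigma$-Prikry poset, where $\ell(p)$ is the integer already attached to $p=\langle f,T\rangle$ in the definition, where $c\colon\mathbb{P}\to\kappa$ is the auxiliary function recording (a bound on) the stem of $p$ --- say $c(p):=\sup_{\alpha\in\dom(f)}\max(f(\alpha))$, with the convention $c(p)=0$ when $\ell(p)=0$ --- and where $\Sigma$ is the constant sequence $\langle\kappa\mid n<\omega\rangle$ at the (regular, here measurable) cardinal $\kappa$. Modulo the change from ``$E(d)$-measure-one set'' to ``successor-set of an $E(d)$-tree'', this is exactly \cite[Lemma~4.5]{DPT} (following \cite{MerSuper}), so I will only indicate the shape of the verification and flag the step that genuinely uses the tree presentation.

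First I would dispatch the \emph{static} axioms. The poset has a greatest element, $\one=\langle\{\langle\kappa,\varnothing\rangle\},T\rangle$ with $T$ the full $E(\{\kappa\})$-tree, and $\ell(\one)=0$; a pure extension preserves $\ell$ (enlarging $\dom(f)$ and shrinking $T$ never changes the common length $n$), and $\ell(p\cat\vec\nu)=\ell(p)+|\vec\nu|$, so $q\leq p$ implies $\ell(q)\geq\ell(p)$, while conversely every $p$ has an extension of each prescribed length $\ell(p)+m$, namely $p\cat\vec\nu$ for a $\prec$-increasing $\vec\nu\in T^{\,p}$ of length $m$ (such $\vec\nu$ exist because every successor-set lies in the nonprincipal $\kappa$-complete ultrafilter $E(\dom(f^p))$, which concentrates on $d$-objects with $\nu(\kappa)$ above any prescribed bound below $\kappa$). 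That $\leq^0\,=\,\leq^*$ is $\kappa$-directed-closed --- i.e.\ $\sigma_{\ell(p)}$-directed-closed for the constant $\Sigma$ --- is the place where $\crit(j)=\kappa$ enters: given a $\leq^*$-directed family $\{p_i\}_{i<\theta}$ below $p$ with $\theta<\kappa$, the stems $f^{p_i}$ are pairwise compatible, so $f^*:=\bigcup_i f^{p_i}$ is again a legitimate stem with $\dom(f^*)\in\mathcal{D}$ (a union of $<\kappa$ members of $[\lambda\setminus\kappa]^{<\lambda}$, hence of size $<\lambda$); lifting each $T^{p_i}$ to the domain $\dom(f^*)$ and intersecting, the successor-sets become intersections of $<\theta$ members of $E(\dom(f^*))$, hence still members by $\kappa$-completeness, and (after a harmless further shrinking above $c(\langle f^*,\cdot\rangle)$) the resulting condition is a $\leq^*$-lower bound for the family. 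The elementary behaviour of $c$ (nondecreasing along $\leq$, strictly increasing along one-point extensions, and bounded below $\kappa$) is read off the definitions.

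The two \emph{dynamic} axioms carry the weight, and they are exactly where \cite{MerSuper} and \cite[\S4]{DPT} must be re-run with successor-sets of $E(d)$-trees in place of measure-one sets. The first is the amalgamation/mixing axiom: given $p'\leq^0 p$ and $q\leq^n p$, one produces $q'\leq^0 q$ with $q'\leq^n p'$. Writing $q\leq^* p\cat\vec\nu$ for some $\vec\nu\in T^{\,p}$ of length $n$, the key point is to transport $\vec\nu$ to $p'$: since the projection $\pi_{\dom(f^{p'}),\dom(f^p)}$ is a Rudin--Keisler projection from $E(\dom(f^{p'}))$ onto $E(\dom(f^p))$, one finds $\vec\mu\in T^{p'}$ projecting to $\vec\nu$, forms $p'\cat\vec\mu$ --- a $\leq^*$-extension of $p\cat\vec\nu$ after matching coordinates --- and then intersects its tree with $T^q$ over the common domain and takes the union of the two stems, obtaining $q'$. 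The second is the strong Prikry property: for every $p$ and every dense open $D\subseteq\mathbb{P}$ there are $q\leq^0 p$ and $n<\omega$ with $\{q'\leq q\mid\ell(q')=\ell(q)+n\}\subseteq D$. This is the classical Prikry-style fusion: recursing along $T^{\,p}$ and using $\kappa$-completeness of the $E(d)$'s to shrink successor-sets, one homogenizes, for each branch-segment $\vec\nu$ through the shrunk tree, the statement ``some $\leq^0$-extension of $p\cat\vec\nu$ lies in $D$''; a density argument pins this down positively at a single level $n$, and a last appeal to completeness together with the openness of $D$ upgrades ``has a $\leq^0$-extension in $D$'' to ``belongs to $D$'', yielding $q$ and $n$.

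I expect the genuine obstacle to be the amalgamation step in the tree formulation: one must be careful that the stem $\vec\nu$ witnessing $q\leq^n p$ can be lifted to $p'$ whose domain is strictly larger, and this is precisely what the Rudin--Keisler coherence of the projections $\pi_{e,d}$ between the $E(d)$'s provides. Everything else --- including the remaining bookkeeping clauses relating $c$ to the orderings --- is a faithful transcription of the arguments of \cite{MerSuper} and \cite[Lemma~4.5]{DPT} from measure-one sets to trees, so the theorem follows.
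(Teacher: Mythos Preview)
The paper provides no proof of this statement: it is stated with a terminal \qed\ and deferred entirely to \cite{MerSuper} and \cite[Lemma~4.5]{DPT}. Your sketch follows exactly the approach of those references, transcribed from the measure-one-set presentation to the tree presentation, and the outline is correct.
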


Next we describe the various natural subforcings of $\mathbb{P}:$

\begin{definition}
    For each $d\in\mathcal{D}^\ast$ denote by $\mathbb{P}_d$ the subposet of $\mathbb{P}$ whose universe is
    $\{p\in\mathbb{P}\mid \dom(f^p)\s d\}.$ 
\end{definition}

\begin{remark}
    Since $\{\kappa\}$ is a domain, $\mathbb{P}_{\{\kappa\}}$ is well-defined. A moment of reflection makes clear that $\mathbb{P}_{\{\kappa\}}$ is essentially the usual Tree Prikry forcing \cite{Gitik-handbook}.
\end{remark}
The forthcoming Lemma~\ref{lemma: quasi projections} can be proved as in \cite[Lemma~4.9]{DPT}. The main difference compared to \cite{DPT} is that here we obtain a commutative system of \textbf{projections} rather than just mere weak projections. This is the reason why in  \cite{DPT} we had to pass to the Boolean completions of the forcings $\mathbb{P}_e$. It is precisely the fact that we are considering trees (and not just measure one sets, as in \cite{DPT}) which makes each of the $\pi_{e,d}$'s a projection. The key observation is the following: Let $p\in\mathbb{P}_e$  and $q\leq \pi_{e,d}(p)$. Let $\vec\nu$ be a ($\prec$-increasing) sequence $\vec\nu\in (T^p\restriction d)^{<\omega}$ such that $q\leq^*\pi_{e,d}(p)\cat\vec\nu$. By definition, $T^p\restriction d:=\{\langle \mu_0\restriction d, \dots, \mu_n\restriction d\rangle\mid \langle \mu_0, \dots, \mu_n\rangle\in T^p\}.$
Members of $T^p$ are $\prec$-increasing sequences, hence there is a $\prec$-increasing sequence $\langle \tau_0,\dots, \tau_{n}\rangle\in T^p$  whose $d$-projection is $\vec\nu$. Thus, $\langle \tau_0,\dots, \tau_{n}\rangle$  is ``addable'' to $p$; in particular, $p\cat\langle \tau_0,\dots\tau_n\rangle$ is a well-defined condition. Bearing this in mind, the argument in \cite[Lemma~4.9]{DPT} yields the following:
\begin{lemma}\label{lemma: quasi projections}
There is a commutative system of  projections 
$$\mathcal{P}=\langle \pi_{e,d}\colon \mathbb{P}_e\rightarrow\mathbb{P}_d\mid d\s e\,\wedge\, e,d\in\mathcal{D}^\ast\rangle$$
given by $\pi_{e,d}\colon p\mapsto \langle  f^p\restriction d, T^p\restriction d\rangle$. Also,   $|\mathrm{tcl}(\mathbb{P}_d)|<\lambda$ for all $d\in \mathcal{D}^\ast.$

In particular, if $G\s\mathbb{P}_e$ is generic then $\pi_{e,d}``G$ induces a $\mathbb{P}_d$-generic. \qed
\end{lemma}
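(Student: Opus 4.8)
The plan is to verify, for all domains $d\subseteq e$, the three clauses in the definition of a projection (Definition~\ref{def: projections}) for the map $\pi_{e,d}\colon p\mapsto\langle f^p\restriction d,\,T^p\restriction d\rangle$, then to check commutativity of the system, establish the cardinality bound, and finally read off the ``in particular'' from Fact~\ref{fact: basics of weak projections}(1). Most of this is the bookkeeping already carried out in \cite[Lemma~4.9]{DPT}, only now smoother: working with trees rather than measure-one sets, no passage to Boolean completions is needed.

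First I would check that $\pi_{e,d}$ lands in $\mathbb{P}_d$: since $\kappa\in d\cap\dom(f^p)$ the set $\dom(f^p)\cap d$ is again a domain; that $T^p\restriction d$ is an $E(\dom(f^p)\cap d)$-tree follows from the Rudin--Keisler projection $\restriction(\dom(f^p)\cap d)$ carrying each successor set (which lies in $E(\dom(f^p))$) to a set in $E(\dom(f^p)\cap d)$, while the side conditions on $\nu(\kappa)$ are inherited verbatim. Clause~(1) is immediate, as the $\leq$-weakest condition has stem of domain $\{\kappa\}\subseteq d$ and the full $E(\cdot)$-tree, both preserved by $\restriction d$. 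For clause~(2) I would first record the two elementary identities $\pi_{e,d}(p\cat\vec\nu)=\pi_{e,d}(p)\cat(\vec\nu\restriction d)$ and ``$q\leq^* p\Rightarrow\pi_{e,d}(q)\leq^*\pi_{e,d}(p)$''; then, if $q\leq p$ via a $\prec$-increasing $\vec\nu$ in $T^p$ with $q\leq^* p\cat\vec\nu$, applying $\pi_{e,d}$ yields $\pi_{e,d}(q)\leq^*\pi_{e,d}(p)\cat(\vec\nu\restriction d)$, i.e.\ $\pi_{e,d}(q)\leq\pi_{e,d}(p)$. Commutativity $\pi_{e,d}\circ\pi_{g,e}=\pi_{g,d}$ for $d\subseteq e\subseteq g$ reduces to $(f\restriction e)\restriction d=f\restriction d$ and $(T\restriction e)\restriction d=T\restriction d$, the latter from $(\nu\restriction e)\restriction d=\nu\restriction d$ for $d$-objects. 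For the cardinality bound, a condition of $\mathbb{P}_d$ is a pair $\langle f,T\rangle$ with $\dom(f)\in[d]^{<\lambda}$, $f$ into ${}^{<\omega}\kappa$, and $T\subseteq\ob(\dom(f))^{<\omega}$; by $\gch$ and the inaccessibility of $\lambda$ there are fewer than $\lambda$ such pairs, each lying in $H(\lambda)$, so $|\mathrm{tcl}(\mathbb{P}_d)|<\lambda$ by regularity of $\lambda$.

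The heart of the matter is clause~(3): given $p\in\mathbb{P}_e$ and $q\leq\pi_{e,d}(p)$ in $\mathbb{P}_d$, produce $p'\leq p$ with $\pi_{e,d}(p')\leq q$. This is where the shift from measure-one sets to trees pays off, along the lines of the observation preceding the statement. Fix a $\prec$-increasing $\vec\nu\in(T^p\restriction d)^{<\omega}$ with $q\leq^*\pi_{e,d}(p)\cat\vec\nu$. By definition of $T^p\restriction d$ there is a $\prec$-increasing $\vec\tau\in T^p$ with $\vec\tau\restriction d=\vec\nu$; since members of $T^p$ are $\prec$-increasing, $r:=p\cat\vec\tau$ is a legitimate condition of $\mathbb{P}_e$ with $\pi_{e,d}(r)=\pi_{e,d}(p)\cat\vec\nu$, so it suffices to find $p'\leq^* r$ with $\pi_{e,d}(p')\leq q$. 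I would set $\dom(f^{p'}):=\dom(f^r)\cup\dom(f^q)$, let $f^{p'}$ agree with $f^q$ on $\dom(f^q)$ and with $f^r$ on $\dom(f^r)\setminus\dom(f^q)$ (consistently, since $f^r\restriction d\subseteq f^q$ forces $\dom(f^r)\cap d\subseteq\dom(f^q)$ and agreement there), and let $T^{p'}$ consist of the $\prec$-increasing $\vec\eta\in\ob(\dom(f^{p'}))^{<\omega}$ obeying the side conditions in the definition of $\mathbb{P}$ against $f^{p'}$ and satisfying $\vec\eta\restriction\dom(f^r)\in T^r$ and $\vec\eta\restriction d\in T^q$. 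Then $f^r\subseteq f^{p'}$ and $T^{p'}\restriction\dom(f^r)\subseteq T^r$, so $p'\leq^* r\leq p$; meanwhile $f^{p'}\restriction d=f^q$ (since $\dom(f^{p'})\cap d=\dom(f^q)$) and $T^{p'}\restriction d\subseteq T^q$, whence $\pi_{e,d}(p')\leq^* q$, a fortiori $\leq q$.

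The one genuinely non-routine point I anticipate is checking that this $T^{p'}$ is an $E(\dom(f^{p'}))$-tree: for $\vec\eta\in T^{p'}$ its successor set is the intersection of the $\restriction$-preimages of $\mathrm{Succ}_{T^r}(\vec\eta\restriction\dom(f^r))\in E(\dom(f^r))$ and $\mathrm{Succ}_{T^q}(\vec\eta\restriction d)\in E(\dom(f^q))$, further intersected with the (generically measure-one) sets enforcing the side conditions and $\prec$-extension; since $E(\dom(f^{p'}))$ is $\kappa$-complete and closed under $\restriction$-preimages, this successor set lies in $E(\dom(f^{p'}))$, and non-emptiness of $T^{p'}$ is clear. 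Everything else is the verification already performed in \cite[Lemma~4.9]{DPT}. The ``in particular'' is then immediate from Fact~\ref{fact: basics of weak projections}(1).
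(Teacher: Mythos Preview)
Your proposal is correct and follows precisely the route the paper indicates: it defers the routine verifications to \cite[Lemma~4.9]{DPT} and isolates as the new ingredient exactly the observation that, because conditions carry trees rather than measure-one sets, a $\prec$-increasing $\vec\nu\in T^p\restriction d$ lifts to some $\prec$-increasing $\vec\tau\in T^p$, so that $p\cat\vec\tau$ is a legitimate extension whose $d$-projection is $\pi_{e,d}(p)\cat\vec\nu$. Your amalgamation of $r=p\cat\vec\tau$ with $q$ and the check that $T^{p'}$ is an $E(\dom(f^{p'}))$-tree are the standard details implicit in the reference to \cite{DPT}.
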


\begin{lemma}[Cardinal structure, {\cite{SupercompatRadinExtender}}]\label{lemma: cardinal structure in PE}\hfill
\begin{enumerate}
    \item $\mathbb{P}$ is $\lambda^+$-cc and  preserves both $\kappa$ and $\lambda$.
    \item $\one\forces_{\mathbb{P}}``(\kappa^+)^{V[\dot{G}]}=\lambda\,\wedge\,\cf(\kappa)^{V[\dot{G}]}=\omega$''.\qed
\end{enumerate}
\end{lemma}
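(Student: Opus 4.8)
The plan is to establish, roughly in this order, the $\lambda^+$-cc, the preservation of $\kappa$ (and of every cardinal $\le\kappa$), the computation $\cf(\kappa)^{V[G]}=\omega$, and finally the equality $(\kappa^+)^{V[G]}=\lambda$, which in particular subsumes the preservation of $\lambda$. The first three rest on the $\Sigma$-Prikry structure from Theorem~\ref{lemma: PE SigmaPrikry} and on a $\Delta$-system count fuelled by the $\gch$; the last is where the genuine content of Merimovich's analysis enters. For the $\lambda^+$-cc I would run a $\Delta$-system argument on \emph{stems}. Since $\lambda$ is inaccessible, $|\mathcal{D}|=\lambda^{<\lambda}=\lambda$, and for a fixed domain $d$ there are at most $|{}^{<\omega}\kappa|^{|d|}\le\kappa^{<\lambda}=\kappa^+\le\lambda$ functions $f\colon d\to{}^{<\omega}\kappa$ (using $\gch$ and the inaccessibility of $\kappa$), so there are at most $\lambda$ stems in all. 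Given a putative antichain $\{p_\xi\mid\xi<\lambda^+\}$, pigeonhole yields $\lambda^+$ of the $p_\xi$ sharing a common stem $f$; but any two conditions $\langle f,T_0\rangle,\langle f,T_1\rangle$ with the same stem are $\le^*$-compatible, since $E(d)$ is a $\kappa$-complete ultrafilter and hence $T_0\cap T_1$ (the tree whose nodes are the $\prec$-increasing sequences in both, with successor sets $\mathrm{Succ}_{T_0}(\vec\nu)\cap\mathrm{Succ}_{T_1}(\vec\nu)\in E(d)$) is again an $E(d)$-tree, so $\langle f,T_0\cap T_1\rangle$ is a common $\le^*$-lower bound. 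This contradicts antichain-ness, so $\mathbb{P}$ is $\lambda^+$-cc and all cardinals $\ge\lambda^+$ are preserved.

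Preservation of every cardinal $\le\kappa$ is the standard Prikry-type argument. By Theorem~\ref{lemma: PE SigmaPrikry}, $\mathbb{P}$ is $\Sigma$-Prikry for $\Sigma=\langle\kappa\mid n<\omega\rangle$, so in particular it has the Prikry property relative to $\le^*$; and $\le^*$ is $\kappa$-closed, because a $\le^*$-decreasing sequence $\langle p_\alpha\mid\alpha<\delta\rangle$ with $\delta<\kappa$ has a lower bound obtained by taking the union of the stems (whose domain, a union of ${<}\kappa$ domains, is still a domain by regularity of $\lambda$) together with the levelwise intersection of the pullbacks of the trees along the projections of Lemma~\ref{lemma: quasi projections}, which is an $E$-tree by $\kappa$-completeness. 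Deciding ``$\check\alpha\in\dot x$'' one $\alpha$ at a time along a $\le^*$-decreasing sequence --- Prikry property at successors, $\kappa$-closure at limits --- then shows $\mathbb{P}$ adds no new bounded subset of $\kappa$; hence $\kappa$ stays a cardinal (its cofinality may drop, which is harmless) and nothing $\le\kappa$ is collapsed.

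For $\cf(\kappa)^{V[G]}=\omega$ I would use a density argument at the trivial coordinate: for every $d\in\mathcal{D}$ and $\gamma<\kappa$ one has $\{\nu\mid\nu(\kappa)>\gamma\}\in E(d)$, witnessed by $\mc(d)$ since $\mc(d)(j(\kappa))=\kappa>\gamma=j(\gamma)$, so the set of conditions whose stem at $\kappa$ ends above $\gamma$ is dense and a one-point extension always suffices; thus the generic reads off an $\omega$-sequence cofinal in $\kappa$. It then remains to prove $(\kappa^+)^{V[G]}=\lambda$. The inequality $(\kappa^+)^{V[G]}\le\lambda$ is precisely the preservation of $\lambda$; for the reverse one must collapse every $V$-cardinal $\mu$ with $\kappa<\mu<\lambda$ to $\kappa$, the mechanism being that the generic coordinate functions $F(\alpha):=\bigcup\{f^p(\alpha)\mid p\in G\}\in{}^\omega\kappa$, $\alpha\in[\kappa,\lambda)$, code surjections of $\kappa$ onto each such $\mu$ --- the characteristic effect of the long system $\langle E(d)\mid d\in\mathcal{D}\rangle$, carried out in detail in \cite{MerSuper} (see also \cite{SupercompatRadinExtender}).

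The step I expect to be the main obstacle --- and the reason this lemma is quoted rather than reproved here --- is exactly the coexistence of these two facts: $\mathbb{P}$ collapses the whole interval $(\kappa,\lambda)$ onto $\kappa$ while keeping $\lambda$ a cardinal. The $\lambda^+$-cc does not preserve $\lambda$ by itself, and a naive ``column-covering'' of $[\kappa,\lambda)$ by countably many pieces of the generic would collapse $\lambda$ as well; the correct route goes through the fine structure of $\mathbb{P}$ --- e.g. factoring it, below a condition of positive length, through the tree Prikry forcing $\mathbb{P}_{\{\kappa\}}$ (which preserves all cardinals) followed by a Levy-collapse-like quotient that is $\lambda$-cc in the intermediate model --- so that $\lambda$ survives. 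Granting that, ``$(\kappa,\lambda)$ collapsed'' together with ``$\lambda$ preserved'' and ``$\kappa$ preserved'' yields $(\kappa^+)^{V[G]}=\lambda$, completing the proof.
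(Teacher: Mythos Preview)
The paper does not prove this lemma: it is stated with a citation to \cite{SupercompatRadinExtender} and closed with \qed, so there is no ``paper's own proof'' to compare against. Your outline of the routine parts --- the $\lambda^+$-cc via a stem count, preservation of cardinals $\le\kappa$ from the Prikry property plus $\kappa$-closure of $\le^*$, and the cofinality computation at the $\kappa$-coordinate --- is the standard route and is essentially correct.

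One arithmetical slip: you write $\kappa^{<\lambda}=\kappa^+$, but under $\gch$ with $\lambda$ inaccessible one has $\kappa^{<\lambda}=\sup_{\mu<\lambda}\kappa^\mu=\lambda$ (for $\mu\ge\kappa$, $\kappa^\mu=\mu^+$). This does not damage the argument, since the bound you actually need is $\le\lambda$, and indeed the total number of stems is $\lambda\cdot\lambda=\lambda$; the pigeonhole step then only requires two conditions with a common stem, not $\lambda^+$ of them.

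On the hard part you are right to flag it and to defer to Merimovich. Your proposed factorization through $\mathbb{P}_{\{\kappa\}}$ followed by a ``$\lambda$-cc quotient'' is heuristic rather than the actual mechanism: the quotient of $\mathbb{P}$ by tree Prikry is not obviously $\lambda$-cc, and Merimovich's preservation of $\lambda$ proceeds instead through a finer analysis of how names for ordinals below $\lambda$ are decided (a strong Prikry-type reduction combined with a counting of the relevant decision trees, all of which live in $H_\lambda$). So your diagnosis of \emph{where} the difficulty lies is accurate, but the sketched resolution should be replaced by a pointer to the argument in \cite{MerSuper,SupercompatRadinExtender} rather than the factorization you propose.
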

\begin{lemma}[Capturing, essentially {\cite[Lemma~4.10]{DPT}}]\label{lemma: capturing subsets}
    Let $G$ a $\mathbb{P}$-generic filter. For each $a\in\mathcal{P}(\kappa)^{V[G]}$ there is  $d\in \mathcal{D}^\ast$ such that $a\in \mathcal{P}(\kappa)^{V[\pi_d``G]}$.

  {Moreover, for every set $a\in \mathcal{P}(\lambda)^{V[G]}$ with $|a|^{V[G]}<\lambda$ there is $d\in \mathcal{D}^*$ such that $d\in V[\pi_d``G].$}
%\footnote{This is sufficient to establish the \emph{$\kappa$-capturing} property displayed in Definition~\ref{def: nice system}: Let $d\in\mathcal{D}^*$ be as in the lemma. Given $d_0\in\mathcal{D}^*$, $d_0\sle e:=d\cup d_0$ and $x\in \mathcal{P}(\kappa)^{V[g_{e}]}$.}
    \qed
\end{lemma}

For later use we also prove the following technical lemma:

\begin{lemma}[Homogeneity]\label{lem: hom of the poset}
Let $\varphi(x,x_0,\dots,x_n)$ be a first order formula, and let $\dot{b},\dot{a}_0,\dots,\dot{a}_n$ be $\mathbb{P}_e$-names for some domain $e\in\mathcal{D}^\ast$. Then, for each $p\in\mathbb{P}$ with $e\s \dom(f^p)$ the following conditions are equivalent:
\begin{enumerate}
    \item $ p \Vdash_{\mathbb{P}}\varphi(\dot{b},\dot{a}_0,\dots,\dot{a}_n).$
    \item   $ \pi_e(p) \Vdash_{\mathbb{P}_e}\varphi(\dot{b},\dot{a}_0,\dots,\dot{a}_n).$
\end{enumerate}
\end{lemma}
\begin{proof}

(2) $\Rightarrow$ (1) is immediate. For (1) $\Rightarrow$ (2) we perform a density argument. Let $r\leq \pi_e(p)$ be a condition. We will use the following fact:

\begin{claim}
    There are  $p^*_0\leq p$ and $p^*_1\leq^* r$ and an isomorphism $$\gamma\colon \mathbb{P}/p^*_0\rightarrow \mathbb{P}/p^*_1$$ such that $\pi_e``G=(\pi_e\circ \gamma)``G$
    for every generic filter $G\s \mathbb{P}/p^*_0$.
\end{claim}
\begin{proof}[Proof of claim]
 Since $r\leq \pi_e(p)$ there is a sequence of objects $\langle \nu_0,\dots, \nu_{n-1}\rangle \in T^p$ such that $r\leq^* \pi_e(p)\cat\langle \nu_0\restriction e, \dots, \nu_{n-1}\restriction e\rangle$. Define
$$f^{r^*}:=f^r\cup \{\langle \alpha,\varnothing\rangle\mid \alpha\in\dom(f^p)\setminus \dom(f^r)\},$$
$$f^{p^*}:= (f^p\cat \vec\nu)\cup \{\langle\alpha,\varnothing\rangle\mid \alpha\in \dom(f^r)\setminus \dom(f^p)\},$$
$$T^{*}:=\pi^{-1}_{\dom(f^{p^*}),\dom(f^p)}T^p_{\vec\nu}\cap \pi^{-1}_{\dom(f^{p^*}),\dom(f^r)}T^r.$$
Clearly $p^*_1:=\langle f^{r^*}, T^*\rangle$ is a $\leq^*$-extension of $r$. Similarly, $p^*_0:=\langle f^{p^*}, T^*\rangle$ is a $\leq$-extension of $p$ as witnessed by $\vec\nu\in T^p$. Note that $f^{p^*}\restriction e=f^{r^*}\restriction e.$

\smallskip

Define $\gamma\colon s\leq^* p^*_0\cat \vec\eta\mapsto \langle f^s\setminus \dom(f^{r^*})\cup f^{r^*}\cat \vec\eta, T^s\rangle$. It is routine to check that this is an isomorphism witnessing $\mathbb{P}/p^*_0\simeq \mathbb{P}/p^*_1$. 

Let $G\s \mathbb{P}/p^*_0$ be a generic filter and consider $G\restriction e$ the $\mathbb{P}_e$-generic filter induced by $\{s\restriction e\mid s\in G\}$. Since $e\s \dom(f^p)\s \dom(f^{p^*})$ note that
$$\{s\restriction e\mid s\in G\}=\{\langle (f^{p^*}\cat\vec \eta)\restriction e, T^s\rangle\mid s\in G\}=\{\langle (f^{r^*}\cat\vec \eta)\restriction e, T^s\rangle\mid s\in G\}.$$
But the right-hand-side above equals $\{s\restriction e\mid s\in \gamma``G\}$.
Therefore, the $\mathbb{P}_e$-generics induced by both $G$ and $\gamma``G_0$ are the same. As a result, the $\mathbb{P}_e$-generics induced by $G$ and $\gamma``G_0$ via the projection $\pi_e\colon \mathbb{P}\rightarrow \mathbb{P}_e$ are the same. 
\end{proof}
 By the previous lemma there are conditions $p^*_0\leq p$ and $p^*_1\leq r$ and  an isomorphism $\gamma\colon \mathbb{P}/p^*_0\rightarrow\mathbb{P}/p^*_1$ with the above-stated property. 
By (1) we know that $ p^*_0\forces_{\mathbb{P}} \varphi(\dot b,\dot a_0,\dots, \dot a_n).$ We claim that $p^*_1$ forces the same. 
Towards a contradiction, assume $p^*_1\nVdash \varphi(\dot b,\dot a_0,\dots, \dot a_n)$ and let $x\leq p^*_1$ forcing $\neg \varphi(\dot b,\dot a_0,\dots, \dot a_n)$. Let $G$ a $\mathbb{P}$-generic filter with $\bar{x}:=\gamma^{-1}(x)\in G$. Note that $\bar x\leq p^*_0$, as $x\leq p^*_1$ and $\gamma$ is an isomorphism. Then, $V[G]\models \varphi(\dot{b}_{G}, (\dot{a}_0)_{G}, \dots (\dot{a}_{n})_{G})$ because $p^*_0$ forces this fact. Observe that $\dot{b}_{G}=\dot{b}_{\pi_e``G}$ simply because $\dot{b}$ is a $\mathbb{P}_e$-name. Clearly the same observation applies to the $(\dot{a}_i)_{G}$'s. Thus, 
$V[G]\models \varphi(\dot{b}_{\pi_e``G}, (\dot{a}_0)_{\pi_e``G}, \dots (\dot{a}_{n})_{\pi_e``G}).$
On the other hand, we assumed that $x$ forces $\neg \varphi(\dot b,\dot a_0,\dots, \dot a_n)$. So, 
$$V[\gamma``G]\models \neg \varphi(\dot{b}_{(\pi_e\circ \gamma)``G}, (\dot{a}_0)_{(\pi_e\circ \gamma)``G}, \dots (\dot{a}_{n})_{(\pi_e\circ \gamma)``G}).$$
However, $V[\gamma``G]=V[G]$ and $\pi_e``G=(\pi_e\circ \gamma)``G$ by the properties of the isomorphism $\gamma$, yielding we have the desired contradiction.
\end{proof}

\subsection{The $\Sigma$-Prikry tool box}

The core technology employed in this paper is the $\Sigma$-Prikry tool box developed by the authors in \cite{DPT}.
For later use in the manuscript we survey (without proofs) some of the concepts and   results proved in  \cite[\S3]{DPT}. Readers unfamiliar with this material are advised to have a copy of \cite{DPT} at their disposal. For the rest of the section we assume that 
$$\langle \pi_{e,d}\colon \mathbb{P}_e\rightarrow\mathbb{P}_d\mid d\s e\,\wedge\, e,d\in\mathcal{D}^\ast\rangle$$
is the commutative system of forcings and projections identified in Lemma \ref{lemma: quasi projections}.

\smallskip

We begin recalling the \emph{interpolation lemma} which is instrumental in the proofs of the forthcoming constellation lemma as well as in other arguments of this paper. 

\begin{lemma}[The Interpolation Lemma, {\cite[Lemma 3.4]{DPT}}]\label{lemma: interpolation}
    Let $G$ be a $\mathbb{P}$-generic, let $d\subsetneq e$ be domains in $\mathcal{D}^\ast$ and let $p\in\mathbb{P}/\pi_d`` G$. Then there is a $\mathbb{P}_e/\pi_d`` G$-generic filter $h\in V[G]$ such that $\pi_e(p)\in h$. \qed
\end{lemma}

\begin{lemma}[The Constellation Lemma, {\cite[Lemma 3.18]{DPT}}]\label{lemma: constellation lemma}
Let $G$ be a $\mathbb{P}$-generic filter, let $d\in\mathcal{D}^\ast$, let $h\in V[G]$ be $\mathbb{P}_d$-generic, and let $p\in\mathbb{P}/h$. 
   There is a $\mathbb{P}/h$-generic $G^\ast$ (over $V[h]$) with $p\in G^*$
    and $\mathcal{P}(\kappa)^{V[G^\ast]}=\mathcal{P}(\kappa)^{V[G]}.$\qed
\end{lemma}
A useful consequence of the above is the following:

\begin{cor}[{$L(\mathcal{P}(\kappa))$-capturing}]\label{cor: L(P(kappa))-capturing}
 Let $A\in V$, and let $E\in L(\mathcal{P}(\kappa)^{V[G]}, A)$. Suppose there is $d_0\in\mathcal{D}^\ast$ with $E\s V[\pi_{d_0}`` G]$. Then there is $d\in\mathcal{D}^\ast$ with $d_0\s d$ and $E\in V[\pi_d`` G]$.
\end{cor}
\begin{proof}
    First, since $E\in L(\mathcal{P}(\kappa)^{V[G]}, A)$ and $E \s V[\pi_{d_0}``G]$, there is a first-order formula $\psi(x,y_0,\dots, y_m)$ in the language of set theory and parameters $e_0,\dots, e_m$ in $\mathcal{P}(\kappa)^{V[G]} \cup A\cup \{A\}\cup \ord$ such that
    $$E=\{a\in V[\pi_{d_0}`` G]\mid V[G]\models \psi(a,e_0,\dots,e_m)\}.$$
    
    By the $\kappa$-capturing property, there is $d\in \mathcal{D}^*$ with $d_0\s d$ and $\{e_0\dots,e_m\}\in  V[\pi_{d}``G]$. Crucially, $\psi(x,y_0,\dots, y_{m})$ can be assumed to be absolute between generic extensions via $\mathbb{P}/\pi_{d}``G$ that share the same $\mathcal{P}(\kappa)$; namely, if $G_0$ and $G_1$ are $\mathbb{P}/\pi_{d}``G$-generic filters over $V[\pi_d``G]$ with $\mathcal{P}(\kappa)^{V[G_0]}=\mathcal{P}(\kappa)^{V[G_1]}$, then 
        $$V[G_0]\models \psi(a, e_0,\dots, e_{m})\;\Longleftrightarrow\; V[G_1]\models \psi(a, e_0,\dots, e_{m}).$$  
    \begin{claim}\label{claim: L(P(kappa))-capturing}
        $E=\{a\in V[\pi_{d_0}`` G]\mid  {V[\pi_d``G]}\models``\one\forces_{\mathbb{P}/\pi_{d}``G}\psi(\check{a},\check{e}_0,\dots,\check{e}_m)"\}$.
    \end{claim}
    \begin{proof}[Proof of claim]
        The right-to-left inclusion is evident. Conversely, let $a\in E$ and suppose, towards a contradiction, that there is $p\in \mathbb{P}/\pi_{d}``G$ such that \begin{equation}\label{eq star}
            V[\pi_d``G]\models ``p\forces_{\mathbb{P}/\pi_{d}``G}\neg\psi(\check{a},\check{e}_0,\dots,\check{e}_m)".
        \end{equation} Apply the \emph{Constellation Lemma} (Lemma~\ref{lemma: constellation lemma}) to the condition $p\in\mathbb{P}/\pi_{d}``G$. This lemma ensures the existence of a $\mathbb{P}/\pi_{d}``G$-generic $G^\ast$ such that $p\in G^\ast$ and $\mathcal{P}(\kappa)^{V[G^\ast]}=\mathcal{P}(\kappa)^{V[G]}.$
       By \eqref{eq star} above,  $V[G^\ast]\models\neg \psi(a, e_0,\dots, e_m)$. By absoluteness of $\psi(x,y_0,\dots, y_m)$, $V[G]\models\neg \psi(a,e_0,\dots,e_m)$. A contradiction with  $a\in E$.
    \end{proof}
    By the claim, $E$ is definable in $V[\pi_{d}``G]$ using the quotient forcing $\mathbb{P}/\pi_{d}``G$ and the parameters $e_0,\dots, e_m$. As a result, $E\in V[\pi_{d}``G]$. 
\end{proof}

\subsection{Combinatorial consequences of the $\kappa$-$\psp$.}\label{sec: combinatorial consequences of PSP}
 As anticipated in the Introduction ($\S\ref{sec: intro}$), we discuss a series of interesting combinatorial consequences of the $\kappa$-$\psp$. Accordingly, we briefly recall the relevant definitions. As usual, $\kappa$ will be a singular strong limit cardinal with $\cf(\kappa)=\omega$. Letting $\langle \kappa_n\mid n<\omega\rangle$ be an increasing sequence of regular cardinals, $\prod_{n<\omega} \kappa_n$ denotes the set of functions $f: \omega\rightarrow \sup_{n<\omega}\kappa_n$ such that $f(n)<\kappa_n$ for all $n<\omega$. If $f,g\in \prod_{n<\omega}\kappa_n$ we write $f<^\ast g$ whenever $\{n<\omega\mid f(n)\geq g(n)\}$ is finite. We define $f=^\ast g$ similarly.  A \emph{scale on} $\langle \kappa_n\mid n<\omega\rangle$ \emph{of length} $\mu$ is a $<^\ast$-increasing and $<^\ast$-cofinal sequence $\langle f_\beta\mid \beta<\mu\rangle$ of functions in $\prod_{n<\omega}\kappa_n$. A \emph{scale at} $\kappa$ is a scale $\langle f_\beta\mid \beta<\kappa^+\rangle$ on some $\langle \kappa_n\mid n<\omega\rangle$ converging to $\kappa$. On the other hand, $\diamondsuit_{\kappa^+}$ is the principle postulating  the existence of a sequence $\langle A_\alpha\mid \alpha < \kappa^+\rangle$ such that for every $A \s \kappa^+$ the set $\{\alpha <\kappa^+\mid A_\alpha = A\cap \alpha\}$ is stationary.  The $\sch_\kappa$ asserts that $\mathcal{P}(\kappa)$ is well-orderable and that $|\mathcal{P}(\kappa)|=\kappa^+$. Under $\zf$, the failure of $\diamondsuit_{\kappa^+}$ follows from $\neg\sch_\kappa$.
\begin{prop}[$\zf+\dc_\kappa$]\label{thm: combinatorics after psp} Assume that every subset of ${}^\omega \kappa$ has the $\kappa$-$\psp$. Then, there is no $\kappa^+$-sequence consisting of distinct members of $\mathcal{P}(\kappa)$. In particular, \begin{enumerate}
        \item\label{item 1: combinatorics after psp} there are no scales at $\kappa$, and
        \item\label{item 2: combinatorics after psp} both $\sch_\kappa$ and $\diamondsuit_{\kappa^+}$ fail.
        \item\label{item 3: combinatorics after psp} $\kappa^+$ is inaccessible in $L[a]$ for all $a\s\kappa$.\footnote{Note that by $\kappa$-capturing this is trivially true in the Merimovich's model defined in $\S\ref{subsec: Meromovich}$.}
        %\item There is no $\kappa^+$-sequence consisting of distinct members of $\mathcal{P}(\kappa)$.\seba{I think (3) implies (1) and (2)}
%        \item Both $\ads_\kappa$ and $\npt(\kappa^+,\aleph_1)$ fail.
    \end{enumerate}
    As a result, the above hold in $L(\mathcal{P}(\kappa))$ of the model in Theorem \ref{PSP in Solovay}.
\end{prop} 
%Proposition \ref{thm: combinatorics after psp} yields another analogy with the countable classical case. For instance, it is well known  that $\omega_1$ is inaccessible in $L[a]$ for all $a\s\omega$ provided that every set of reals has the $\psp$ (cf. \cite{Kan}). 
Before proving Proposition \ref{thm: combinatorics after psp}, some remarks are in order. 

$(\alpha)$ It is well known  that $\omega_1$ is inaccessible in $L[a]$ for all $a\s\omega$ provided that every set of reals has the $\psp$ (cf. \cite{Kan}). Thus, Proposition \ref{thm: combinatorics after psp} yields another analogy with the countable classical case.

$(\beta)$ A celebrated theorem of Shelah says that, in $\zfc$, every singular cardinal of countable cofinality admits a scale (see, e.g., \cite[p. 55]{Sh:g} or \cite[Theorem 2.26]{Abraham2010}). Shelah's proof, though, makes heavy use of the Axiom of Choice. In \cite[Theorem 9]{ShiTrang}, Shi--Trang proved that, under $I_0(\kappa)$, there are no scales at $\kappa$ in $L(\mathcal{P}(\kappa))$. Proposition \ref{thm: combinatorics after psp} combined with Theorem \ref{PSP in Solovay} shows that this configuration  can be consistently obtained from considerably weaker large cardinal assumptions.

$(\gamma)$ In \cite[Theorem 15]{ShiTrang}, Shi--Trang show that, under $I_0(\kappa)$, there is no $\kappa^+$-sequence of distinct elements of $\mathcal{P}(\kappa)$ in $L(\mathcal{P}(\kappa))$, yielding the failures of $\mathsf{SCH}_\kappa$ and $\diamondsuit_{\kappa^+}$ in $L(\mathcal{P}(\kappa))$. This is further evidence that $L(\mathcal{P}(\kappa))$ behaves under $I_0(\kappa)$ very much like $L(\mathbb{R})$ does under $\ad^{L(\mathbb{R})}$. Proposition \ref{thm: combinatorics after psp} shows that the same consequence can be obtained from the $\kappa$-$\psp$.

\begin{proof}[Proof of Proposition \ref{thm: combinatorics after psp}]
    Towards a contradiction, assume $\langle x_\alpha\mid \alpha<\kappa^+\rangle$ is a sequence of distinct subsets of $\kappa$. Appealing to the $\kappa$-$\psp$, there is an injection $\iota: {}^\kappa2\rightarrow \mathcal{P}(\kappa)$ with $\iota``({}^\kappa 2)\subseteq \{x_\alpha\mid \alpha<\kappa^+\}$. Define an injective map $h:{}^\kappa 2\to\kappa^+$ as $h(x)=\alpha$ if and only if $\iota(x)=x_\alpha$. This yields a well-ordering of ${}^\omega\kappa$ of length $\kappa^+$, and also a well-ordering of the set of all $\kappa$-perfect subsets of ${}^\omega\kappa$, say $\langle P_\alpha\mid \alpha< \kappa^+\rangle$.\footnote{Recall that $\kappa$ is a strong limit singular cardinal with $\cf(\kappa)=\omega$, so there is a bijection between ${}^\omega\kappa$ and ${}^\kappa 2$.} We build a subset of ${}^\omega\kappa$ without the $\kappa$-$\psp$ via a standard recursive construction (see \cite[Proposition~11.4]{Kan}). Specifically, define $\langle a_\alpha\mid \alpha<\kappa^+\rangle$ and $\langle b_\alpha\mid \alpha<\kappa^+\rangle$, with $a_\alpha\neq b_\alpha$ for all $\alpha<\kappa^+$, as follows. If $a_\beta$ and $b_\beta$ have been defined for $\beta<\alpha$, let $a_\alpha$ be the least $a$ in the well-ordering of ${}^\omega\kappa$ with $a\in P_\alpha \setminus(\{a_\beta\mid\beta<\alpha\}\cup\{b_\beta\mid\beta<\alpha\})$,
    and let $b_\alpha$ be the least $b$ in the well-ordering of ${}^\omega\kappa$ with
    $b\in P_\alpha \setminus(\{a_\beta\mid\beta\leq \alpha\}\cup\{b_\beta\mid\beta<\alpha\})$. Then $\{a_\alpha\mid\alpha<\kappa^+\}$ does not have the $\kappa$-$\psp$, yielding the claimed contradiction. 

    \eqref{item 1: combinatorics after psp} The fact that there are no scales at $\kappa$ follows easily, since every scale $\langle f_\beta\mid \beta<\kappa^+\rangle$ can canonically be coded as a $\kappa^+$-sequence of distinct members of $\mathcal{P}(\kappa)$.

    \eqref{item 2: combinatorics after psp} The failure of $\sch_\kappa$ is obvious.

    \eqref{item 3: combinatorics after psp} Aiming for a contradiction, pick $a\s\kappa$ and $\alpha\leq\kappa$ such that $L[a]\models``2^\alpha\geq\lambda"$, where $\lambda=\kappa^+$. In particular, $L[a]\models``2^\kappa\geq\lambda"$ but this absurd since there cannot be $\kappa^+$ many distinct members of $\mathcal{P}(\kappa)$.
\end{proof}

The above yields the following consistency result, which partially answers a question of Dimonte in \cite{VD}.
\begin{cor}
    The consistency of $$\zfc+\exists\kappa\, \exists\lambda\, (\lambda \text{ is inaccessible and }\kappa \text{ is ${<}\lambda$-supercompact})$$ implies the consistency of $\zf+\text{ There is a strong cardinal $\kappa$ of countable cofinality}$ such that \begin{enumerate}
        \item $\dc_\kappa$ holds.
        \item There is no $\kappa^+$-sequence consisting of distinct members of $\mathcal{P}(\kappa)$.
    \end{enumerate}
\end{cor}
\begin{question}
    What is the consistency strength of $$\zf+\exists\kappa\, (\kappa \text{ is singular and }L(\mathcal{P}(\kappa))\models\neg\ac)\text{?}$$
\end{question}

\section{Combinatorics 
in $L(\mathcal{P}(\kappa))$ of Merimovich's model}\label{sec: Combinatorics in Merimovich}
In this section we would like to prove a few combinatorial properties that are specific of $L(\mathcal{P}(\kappa))^{V[G]}$ where $V[G]$ is Merimovich's model. These will enrich the list of combinatorial properties already isolated in Theorem \ref{thm: combinatorics after psp}.
\begin{theorem}
    Assume that $\kappa$ is supercompact and that $\lambda>\kappa$ is inaccessible. Let  $\mathbb{P}$ be Merimovich's forcing from \S\ref{sec: Merimovich forcing} and $G\s \mathbb{P}$ be generic. Then, the following properties hold in $L(\mathcal{P}(\kappa))^{V[G]}$:
    \begin{enumerate}
        \item $\AP_\kappa$ fails.
        \item $\kappa\xrightarrow[]{\mathrm{OD}} (\omega)^\omega_{V_\mu}$ holds for all $\mu<\kappa$.
           \item Assuming that $\lambda$ is weakly compact in $V$, every $\kappa^+$-tree
           in $L(\mathcal{P}(\kappa))^{V[G]}$ contains a cofinal branch, possibly not in $L(\mathcal{P}(\kappa))^{V[G]}$.
    \end{enumerate}
    Moreover, if $\lambda$ is measurable as witnessed by $U$, then  in the $\dc_\kappa$-model, $L(\mathcal{P}(\kappa)^{V[G]},U)$, properties $(1)$--$(3)$  hold together with $``(\kappa^+)^{V[G]}$ is measurable".

    In addition, in a $\lambda$-distributive generic extension of $V[G]$, 
    $$\mathrm{Cub}_\lambda\restriction (E^\lambda_{\mathrm{Reg}})^V$$
    is a $L(\mathcal{P}(\kappa))^{V[G]}$-ultrafilter.
\end{theorem}

\subsection{Shelah's Approachability Property}

We are interested in showing that the approachability property fails at $\kappa$ in $L(\mathcal{P}(\kappa))^{V[G]}$. This is a very weak property --thus, its negation is very strong-- and was introduced by Shelah in \cite{Sh:108}. 

\begin{comment}
\begin{definition}
    Let $\mu$ be a regular cardinal and $a=\langle a_\alpha: \alpha<\mu\rangle$ a sequence of bounded subsets of $\mu$. A limit ordinal $\alpha<\mu$ is said to be \emph{approachable with respect to} $a$ if there is an unbounded set $A\subseteq\alpha$ with order type $\mathrm{cof}(\alpha)$ such that 
    \[\{A\cap \beta:\beta<\alpha\}\subseteq\{a_\beta:\beta<\alpha\}.\]
\end{definition}

\begin{definition}
    Let $\mu$ be a regular cardinal. Define an ideal $I[\mu]$ on $\mu$ as follows: $S\subseteq\mu$ is in $I[\mu]$ if and only if there is a sequence $a$ of bounded subsets of $\mu$ and a club $C\subseteq\mu$ such that every $\alpha\in C\cap S$ is approachable with respect to $a$.
\end{definition}

This ideal $I[\mu]$ is called the \emph{approachability ideal on} $\mu$.

\begin{definition}
    Let $\kappa$ be any cardinal. We say that the \emph{approachability property} holds at $\kappa$ ($\AP_\kappa$, for short) if $\kappa^+\in I[\kappa^+]$ (hence, $I[\kappa^+]$ is an improper ideal).
\end{definition}

\end{comment}

\begin{definition}
    A sequence $\langle C_\alpha \mid \alpha < \kappa^+ \rangle$ is called an $\AP_\kappa$-\emph{sequence} if for each limit ordinal $\alpha<\kappa^+$, $C_\alpha\s\alpha$ is a club with $\otp(C_\alpha) = \cf(\alpha)$, and there is a club $D\s \kappa^+$ such that $\forall \alpha \in D \ \forall \beta < \alpha \ \exists \gamma < \alpha \ (C_\alpha \cap \beta = C_\gamma)$.  We say that the \emph{approachability property} holds at $\kappa$ ($\AP_\kappa$, for short) if there is an $\AP_\kappa$-sequence
\end{definition}

The $\AP_\kappa$ has been extensively studied by Shelah \cite{Sh:108} and by many other authors, including Cummings–Foreman–Magidor \cite{CumForMag}, Gitik–Krueger \cite{GitikKrueger}, Rinot \cite{RinotAP}, Cummings et al. \cite{CummingsAP}, and more recently by Jakob-Levine \cite{JakobLevine} and Jakob–Poveda \cite{JakobPoveda}. There are various equivalent formulations of $\ap_\kappa$ (see \cite{MR2768694}) but the above is the one that will fit better our arguments.

\smallskip

We begin with a technical application of the Interpolation Lemma \ref{lemma: interpolation}:

\begin{lemma}\label{the weak Perfect Set Lemma}
     Let $G$ be $\mathbb{P}$-generic. Suppose that $d,e\in\mathcal{D}^\ast$, $d\subsetneq e$, $q\in \pi_e``G$, $(\kappa^+)^{V[\pi_d``G]}<\alpha< (\kappa^+)^{V[G]}$, and $C\in\mathcal{P}(\alpha)^{V[\pi_e``G]}\setminus V[\pi_d``G]$. If $\dot{C}$ is a $\mathbb{P}_e/\pi_d``G$-name for $C$ and $q\Vdash_{\mathbb{P}_e/\pi_d``G} \dot{C}\subseteq\check{\alpha}$, then there is a $\mathbb{P}_e/\pi_d``G$-generic filter $h\in V[G]$ (over $V[\pi_d``G]$) with $q\in h$ and $\dot{C}_h\neq C$.
 \end{lemma}
 \begin{proof}
 For the sake of readability we denote $\Vdash_{\mathbb{P}_e/\pi_d``G}$ just by $\Vdash$. No confusion should arise as $\mathbb{P}_e/\pi_d``G$ is the only forcing  involved in the argument.

 Since $\one_{\mathbb{P}_e}\Vdash``\dot{C}\notin V[\pi_d``G]$", there is $\alpha_0<\alpha$ such that $q$ does not decide the statement $``\check{\alpha}_0\in\dot{C}$". Thus there are conditions $q_0$ and $q_1$ in $\mathbb{P}_e/\pi_d``G$, below $q$, such that $q_0\Vdash\check{\alpha}_0\in\dot{C}$ and $q_1\Vdash\check{\alpha}_0\notin\dot{C}$. Two cases may occur:

 \smallskip

 \textbf{Case 1:} Suppose $\alpha_0\in C$. Apply the \emph{Interpolation Lemma} (Lemma~\ref{lemma: interpolation}) to
 \begin{displaymath}
        \begin{tikzcd}
   \mathbb{P} \arrow{r}{\pi_e} \arrow[bend right]{rr}{\pi_d} & \mathbb{P}_e \arrow{r}{\pi_{e,d}}  & \mathbb{P}_d
 \end{tikzcd}
     \end{displaymath}
 $q_1\in\mathbb{P}_e/\pi_d``G\s  \mathbb{P}/\pi_d``G$ to find a $\mathbb{P}_e/\pi_d``G$-generic filter $h\in V[G]$ such that $\pi_e(q_1)=q_1\in h$. Since $q_1\Vdash\check{\alpha}_0\notin\dot{C}$ it follows that $\dot{C}_h\neq C$.

 \smallskip

 \textbf{Case 2:} Suppose that $\alpha_0\notin C$. Apply the \emph{Interpolation Lemma} to $q_0$ to produce a $\mathbb{P}_e/\pi_d``G$-generic filter $h\in V[G]$ with $q_0\in h$. As $q_0\Vdash\check{\alpha}_0\in\dot{C}$ it follows that $\dot{C}_h\neq C$.
 \end{proof}

\begin{theorem}\label{thm: not ap}
$\one \Vdash_{\mathbb{P}}``\text{$L(\mathcal{P}(\kappa))\models \neg \AP_{\kappa}$"}.$ 
\end{theorem}
\begin{proof}
    Let $G$ be $\mathbb{P}$-generic. Suppose, towards a contradiction, that $\vec{C}=\langle C_\alpha\mid\alpha<(\kappa^+)^{V[G]}\rangle$ is an $\AP_{\kappa}$-sequence in  $L(\mathcal{P}(\kappa))^{V[G]}$. Then $\vec{C}$ can be written as $$\vec{C}=\{\langle\alpha, C\rangle\mid V[G]\models \varphi(\alpha, C,a_0,\dots,a_n)\},$$ where $\varphi(x,z,y_0,\dots, y_n)$ is a first-order formula in the language of set theory and $a_0,\dots, a_n$ are parameters in $\mathcal{P}(\kappa)^{V[G]}\cup\ord$.
Moreover, $\varphi(x,y_0,\dots, y_{n})$ is absolute between generic extensions via $\mathbb{P}$ sharing the same $\mathcal{P}(\kappa)$ (or, equivalently, the same $V_{\kappa+1}$).  By $\kappa$-capturing, there is $d\in\mathcal{D}^\ast$ with $\{a_0,\dots, a_n\}\s V[\pi_d``G]$.

    \begin{claim}
       There is $\alpha<(\kappa^+)^{V[G]}$ with $C_\alpha\notin V[\pi_d``G]$.
   \end{claim}
   \begin{proof}[Proof of claim]
       Since $\vec{C}$ is an $\AP_\kappa$-sequence and $\kappa$ is singular in $V[G]$, it follows that $\otp(C_\alpha)=\cf(\alpha)^{V[G]}<\kappa$, for each limit ordinal $\alpha<(\kappa^+)^{V[G]}$. Let $\alpha$ be a regular cardinal in $V[\pi_d``G]$ with $(\kappa^+)^{V[\pi_d``G]}<\alpha<(\kappa^+)^{V[G]}$. We claim that $C_\alpha\notin V[\pi_d``G]$. If not, the $V[\pi_d``G]$-regularity of $\alpha$ yields $$\kappa<(\kappa^+)^{V[\pi_d``G]}<\alpha=|C_\alpha|^{V[\pi_d``G]}\leq\otp(C_\alpha)=\cf(\alpha)^{V[G]},$$ contradicting the fact that $\cf(\alpha)^{V[G]}<\kappa$.
   \end{proof} 
   
   Let $C_\alpha\notin V[\pi_d``G]$. By ${<}\lambda$-capturing, there is an $e\in \mathcal{D}^*$ with $d\subsetneq e$ and $C_\alpha\in V[\pi_e``G]$. Since $\langle\alpha, C_\alpha\rangle\in \vec{C}$, we have $V[G]\models \varphi(\alpha, C_\alpha,a_0,\dots,a_n)$.
   All the above parameters belong to $V[\pi_e``G]$ so there is $p\in G$  such that \begin{equation}\label{eq AP I}
       V[\pi_e``G]\models ``p\Vdash_{\mathbb{P}/\pi_e``G} \text{$\varphi(\check{\alpha}, \check{C}_\alpha, \check{a}_0,\dots, \check{a}_n)$''.}
   \end{equation} 
Denote by $\phi(p,\mathbb{P}, \pi_e``G, \alpha, C_\alpha,a_0,\dots,a_n)$ the formula $$``\text{$p\forces_{\mathbb{P}/\pi_e``G}\varphi(\check{\alpha}, \check{C}_\alpha, \check{a}_0,\dots, \check{a}_n)$''.}$$ Since $V[\pi_e``G]=V[\pi_d``G][\pi_e``G]$, there is $q\in\pi_e``G$ with $q\leq\pi_e(p)$ such that 
   \begin{equation}\label{eq 3 AP: q forces the formula Phi}
       V[\pi_d``G]\models ``\text{$q\forces_{\mathbb{P}_e/\pi_d``G}\phi(\check{p},\check{\mathbb{P}}, \dot{G}, \check\alpha, \check C_\alpha,\check a_0,\dots,\check a_n)$''},
   \end{equation} 
   where $\dot{C}$ is a $\mathbb{P}_e/\pi_d``G$-name with $\dot{C}_{\pi_e``G}=C_\alpha$, and $\dot{G}$ is the canonical name for the generic. Without loss of generality we may assume that $q=\pi_e(p')$ for some  $p'\in G$ below $p$. Moreover, since $C_\alpha\in V[\pi_e``G]$, by extending $q$ (inside $\pi_e``G$) if necessary we may assume that $q\Vdash_{\mathbb{P}_e/\pi_d``G} \dot{C}\subseteq\check{\alpha}.$ Equation~\eqref{eq 3 AP: q forces the formula Phi} implies that 
   $$V[\pi_d``G][h]\models ``\text{$\phi(p,\mathbb{P}, h,\alpha, \dot{C}_h, a_0,\dots,a_n)$''}$$
   or, equivalently, 
   \begin{equation}\label{eq 4 AP: forcing over V[h]}
     V[\pi_d``G][h]\models ``\text{$p\forces_{\mathbb{P}/h}\varphi( \check{\alpha}, \check{\dot{C}}_h, \check{a}_0,\dots, \check{a}_n)$''}.
   \end{equation}
    provided  $h$ is a $\mathbb{P}_e/\pi_d``G$-generic filter (over $V[\pi_d``G]$) containing $q$. By Lemma \ref{the weak Perfect Set Lemma}, there is a $\mathbb{P}_e/\pi_d``G$-generic filter $h\in V[G]$ (over $V[\pi_d``G]$) containing $q$, such that $\dot{C}_h\neq\dot{C}_{\pi_e``G}=C_\alpha$. Next we argue that $\langle \alpha, \dot{C}_h\rangle\in \vec{C}$ which will yield a contradiction. Indeed, if that were to be the case, since $\vec{C}$ is a function, then  $\dot{C}_h=C_\alpha$, but this is false. To see $\langle \alpha, \dot{C}_h\rangle\in \vec{C}$, apply the \emph{Constellation Lemma} (i.e., Lemma~\ref{lemma: constellation lemma}) with respect to the condition $p\in\mathbb{P}/h$. This lemma gives us a $\mathbb{P}/h$-generic filter $G^\ast$ (over $V[h]$) such that $p\in G^\ast$ and  $\mathcal{P}(\kappa)^{V[G^\ast]}=\mathcal{P}(\kappa)^{V[G]}$. Equation~\eqref{eq 4 AP: forcing over V[h]} yields $V[G^\ast]\models\varphi( \alpha, \dot{C}_h, a_0,\dots, a_n)$ and so by absoluteness of $\varphi$ we obtain
   $V[G]\models \varphi(\alpha, \dot{C}_h, a_0,\dots, a_n)$. Therefore $\langle \alpha, \dot{C}_h\rangle\in \vec{C}$.
\end{proof}

\subsection{Existence of $\kappa$-MAD families}
In his seminal paper \cite{HappyFamilies}, Mathias showed that there are no infinite analytic maximal almost disjoint families in $\mathcal{P}(\omega)$. Mathias also asked if infinite mad families can exist in the Solovay model. This was recently answered in the negative by  Törnquist in   \cite{Tornquist}. Here, we would like to draw a difference between the original classical model and ours by showing that the latter encompasses $\kappa$-mad families in $\mathcal{P}(\kappa)$ of size $\kappa$. Recall that this is a family $\{A_\alpha\mid \alpha <\kappa\}$ consisting of (distinct) sets $A_\alpha \s [\kappa]^\kappa$ such that $|A_\alpha \cap A_\beta|<\kappa$ and for every set $B\in [\kappa]^\kappa$ there is some $\alpha< \kappa$ such that $|A_\alpha \cap B|=\kappa$.

\begin{theorem}
    There is a $\kappa$-mad family of size $\kappa$ in $L(\mathcal{P}(\kappa))^{V[G]}$.
\end{theorem}
\begin{proof}
Since $\kappa$ is singular strong limit the Erd\H{o}s--Hechler theorem \cite[Theorem 2.4]{ErdosHechler} gives a $\kappa$-mad family $\{A_\alpha\mid \alpha <\kappa\}\s [\kappa]^\kappa$ of size $\kappa$. Let $\psi\colon \kappa \to \kappa\times \kappa$ be any bijection in $V[G]$ and consider the set $c = \{\beta<\kappa\mid \exists \alpha < \kappa\,\exists \xi\in A_\alpha\, \psi(\beta)=(\alpha,\xi)\}$ that codes the $\kappa$-mad family as a subset of $\kappa$. Then, $\psi, c\in L(\mathcal{P}(\kappa))^{V[G]}$ and so the $\kappa$-mad family is in $L(\mathcal{P}(\kappa))^{V[G]}$. Note that $\{A_\alpha \mid \alpha <\kappa\}$ remains $\kappa$-mad in $L(\mathcal{P}(\kappa))^{V[G]}$ by absoluteness.
\end{proof}

\subsection{A definable partition relation}
The partition relation $\kappa\xrightarrow[]{\mathrm{OD}} (\omega)^\omega_{V_\mu}$ establishes that any $\OD$-coloring  $c\colon [\kappa]^{\omega}\to V_\mu$ admits a set $H\in [\kappa]^\omega$ and a color $x\in V_\mu$ such that $c(s)=x$ for all $s\in [H]^\omega$.  In \cite{Kafkoulis1994-KAFTCS}, Kafkoulis proved the consistency of this partition relation with $\mathrm{ZF}+\mathrm{DC}_\kappa$ starting with a supercompact cardinal  and an inaccessible above. Kafkoulis' model is of the form $V(\bigcup_{\alpha<\kappa}\mathcal{P}(\kappa)^{V[G\restriction\alpha]})$ where $V[G]$ is a supercompact Prikry extension where an inaccessible $\lambda$ is collapsed to $\kappa$.

One limitation of Kafkoulis' model is that it does not contain $\mathcal{P}(\kappa)^{V[G]}$. In this section we prove, building upon Kafkoulis' argument,  that the above-mentioned partition relation holds in $L(\mathcal{P}(\kappa))$ of the Merimovich's extension. In our case there is an additional complication that the proof will feature -- the $\leq^*$ order of $\mathbb{P}$ may inadvertently produce finite bits for new Prikry sequences. This is not the case in Kafkoulis' situation where Supercompact Prikry is used.

\begin{theorem}\label{thm: kafkoulis}
    In $L(\mathcal{P}(\kappa))^{V[G]}$ the partition  $\kappa\xrightarrow[]{\mathrm{OD}} (\omega)^\omega_{V_\mu}$ holds for all $\mu<\kappa$.
\end{theorem}
\begin{proof}
    Let  $c\colon [\kappa]^{\omega}\to V_\mu$ be an $\mathrm{Ord}$-definable coloring in $L(\mathcal{P}(\kappa))^{V[G]}$. Then, 
    $$c=\{(s,x)\mid ([\kappa]^{\omega}\times V_\mu)^{V[G]}\mid V[G]\models \varphi(s,x,\alpha)\}$$
    for some %$a\in \mathcal{P}(\kappa)^{V[G]}$, 
    $\alpha\in \ord$ and $\varphi(\cdot, \cdot, \cdot)$ a formula that is absolute between models of ZFC containing the same subsets of $\kappa$. Note that $V_\mu^{V[G]}$ is just $V_\mu$.

    \smallskip

    Let $p\in G$ be a condition that forces (in $\mathbb{P}$) the formula $$\text{$``\forall s\in [\kappa]^{\omega}\exists! x\in V_\mu\,\varphi(s, x,\check{\alpha})$".}$$
    Denote $\dot{s}_\kappa$ the standard $\mathbb{P}$-name for the generic introduced by $\mathbb{P}_{\{\kappa\}}$.
    \begin{claim}
        The following set of conditions  is dense below $\pi_{\{\kappa\}}(p)$:$$\{\langle t,T\rangle\in \mathbb{P}_{\{\kappa\}}\mid \exists p^*\leq p\,\exists x^*\in V_\mu\; \left(\pi_{\{\kappa\}}(p^*)=\langle t, T\rangle\,\wedge\, p^*\forces_{\mathbb{P}} \varphi(\dot{s}_\kappa\setminus \max(t)+1, \check{x}^*,\check{\alpha})\right)\}.$$
    \end{claim}
    \begin{proof}[Proof of claim]
      Let $\langle t, T\rangle\leq_{\mathbb{P}_{\{\kappa\}}} \pi_{\{\kappa\}}(p)$ be arbitrary. Since $\pi_{\{\kappa\}}$ is a projection, there is a condition $q\leq p$ such that $\pi_{\{\kappa\}}(q)\leq_{\mathbb{P}_{\{\kappa\}}} \langle t, T\rangle$. Say $\pi_{\{\kappa\}}(q)=\langle s, S\rangle$.
      
      Consider the collection of formulas $\langle \Phi_x\mid x\in V_\mu\rangle$ where
       $$\Phi_x\equiv \varphi(\dot{s}_\kappa \setminus \max(s)+1, \check{x},\check{\alpha}).$$
       Since $\langle \mathbb{P},\leq^*\rangle$ is $\kappa$-closed, applying the Prikry property $|V_\mu|^+$-many times we find a condition $p^*\leq^*q$ such that $p^*\parallel_{\mathbb{P}} \Phi_x$ for all $x\in V_\mu$. Since $p^*\leq p$ in fact there is some $x^*\in V_\mu$ such that $p^*\forces_{\mathbb{P}}\Phi_{x^*}$. Note that the stems of $\pi_{\{\kappa\}}(p^*)$ and $\pi_{\{\kappa\}}(q)$ are the same precisely because $p^*\leq^* q$.\footnote{And, crucially, because $\kappa$ is always a member of the domain of the Cohen part of any condition!} As a result, $p^*$ and $x^*$ are as wanted.
    \end{proof}

    Let $p^*\in \mathbb{P}/G_{\{\kappa\}}$, $p^*\leq p$, and $x^*\in V_\mu$ be  such that $$p^*\forces_{\mathbb{P}} \varphi(\dot{s}_\kappa\setminus \max(f^{p^*}(\kappa))+1, x,\check{\alpha}).$$ 
    \begin{claim}
        $c(s_\kappa\setminus \max(f^{p^*}(\kappa))+1)=x^*$.
    \end{claim}
    \begin{proof}
       The constellation lemma gives a $\mathbb{P}/G_{\{\kappa\}}$-generic filter $G^*$ with $p^*\in G^*$ and $\mathcal{P}(\kappa)^{V[G^*]}=\mathcal{P}(\kappa)^{V[G]}$. Hence, $V[G^*]\models \varphi(s_\kappa\setminus \max(f^{p^*}(\kappa))+1,x,\alpha)$ because $(\dot{s}_\kappa)_G=(\dot{s}_\kappa)_{G^*}$.  By absoluteness of $\varphi(\cdot, \cdot, \cdot)$ we conclude that the same formula holds in $V[G]$, therefore $c(s_\kappa\setminus \max(f^{p^*}(\kappa))+1)=x^*$.
    \end{proof}

    We claim that for all $s\in [s_\kappa\setminus \max(f^{p^*}(\kappa))+1]^\omega$ the color of $s$ is also $x^*$.
    \begin{claim}
        $c(s)=x^*$ for all $s\in [s_\kappa\setminus \max(f^{p^*}(\kappa))+1]^\omega$ in $V[G]$.
    \end{claim}
    \begin{proof}
        Let $F_s=\{\langle t, T\rangle\in \mathbb{P}_{\{\kappa\}}\mid t\sqsubseteq f^{p^*}(\kappa)^\smallfrown s\,\wedge\, (f^{p^*}(\kappa)^\smallfrown s\setminus \max(t)+1)\s T\}$.

        By the Mathias criterion of genericity, $F_s$ is generic for $\mathbb{P}_{\{\kappa\}}$. Note that $p^*$ belongs to $ \mathbb{P}/F_s$: First, $p^*\in \mathbb{P}/G_{\{\kappa\}}$. Thus, the tree part of $\pi_{\{\kappa\}}(p^*)$ (call it $T$) contains $s_\kappa\setminus \max(f^{p^*}(\kappa))+1$. In particular,  $s\s T$. Thus, $p^*\in \mathbb{P}/F_s$.

        Applying  the constellation lemma we find a $\mathbb{P}/F_s$-generic  $G^*$ such that $p^*\in G^*$ and computing the same power set of $\kappa$ as $V[G]$. Arguing as before we conclude that $V[G]\models \varphi((\dot{s}_\kappa)_{G^*}\setminus \max(f^{p^*}(\kappa))+1,x,\alpha)$. But note that $(\dot{s}_\kappa)_{G^*}$ is precisely $s$ because $G^*$ is $\mathbb{P}/F_s$-generic and $f^{p^*}(\kappa)^\smallfrown s$ is the Prikry sequence induced by $F_s$.
    \end{proof}
    The above completes the proof of the theorem.
\end{proof}

\subsection{The complete $\vec{\mathcal{U}}$-Ramsey property}\label{sec: the complete Ramsey}

%\begin{definition}
 %   Let $O\s \prod_{n<\omega}\kappa_n$ be an open set in the $\vec{\mathcal{U}}$-$\textsf{EP}$ topology. Given a condition $p\in \mathbb{P}(\vec{\mathcal{U}})$ and a stem $t\in \prod \vec{A}^p$ we say that \emph{$p$ accepts $t$} if there is $q\leq^* p\cat t$ such that $\mathcal{N}_q\s O$. We say that \emph{$p$ rejects $t$} if $p$ does not accept $t$.
%\end{definition}
In this section we assume that we are given an increasing sequence $\langle \kappa_n\mid n<\omega\rangle$  of ${<}\lambda$-supercompact cardinals where $\lambda$ is an inaccessible cardinal above $\sup_{n<\omega}\kappa_n$. Let $\mathbb{P}^{\mathrm{diag}}$ be the \emph{Diagonal Supercompact Extender Based Prikry} from \cite[\S4.2]{DPT}. We will show that $\mathbb{P}^{\mathrm{diag}}$ forces 
\begin{center}
  ``Every $A\in \mathcal{P}(\prod_{n<\omega}\kappa_n)\cap L(\mathcal{P}(\kappa))^{V^{\mathbb{P}^{\mathrm{diag}}}}$ has the complete $\vec{\mathcal{U}}$-Ramsey property",  
\end{center}
where $\vec{\mathcal{U}}:=\langle U_n\mid n<\omega\rangle$ is the sequence of normal measures such that each $U_n\s\mathcal{P}(\kappa_n)$ is induced by some (fixed) elementary embedding $j_n\colon V\to M_n$ witnessing the ${<}\lambda$-supercompactness of $\kappa_n$.

\smallskip

What we intend to prove is the singular analogue of Ellentuck's theorem \cite{Ellentuck} asserting that every set of reals with the Baire property is completely Ramsey.  To prove it, we will bear on  \cite[Corollary 4.12]{DPT} which establishes the $\vec{\mathcal{U}}$-Baire property of every subset of $\prod_{n<\omega}\kappa_n$ in $L(\mathcal{P}(\kappa))$ of ${V^{\mathbb{P}^{\mathrm{diag}}}}$.

\smallskip

%As shown in \cite[]{DPT}, $\mathbb{P}^{\mathrm{diag}}$ projects onto the corresponding Diagonal Prikry forcing $\mathbb{P}(\vec{\mathcal{U}}).$

Through this section we denote by $\mathbb{P}(\vec{\mathcal{U}})$ the Diagonal Prikry forcing with respect to $\vec{\mathcal{U}}$ as presented in Definition~\ref{def: diagonal prikry}. Our primary topological space will be $\prod_{n<\omega}\kappa_n$ endowed with the $\UEP$ topology (see \S\ref{sec: gdst}). The final theorem will be obtained as a combination of a series of technical lemmas, the first of which is the following:

\begin{lemma}\label{lemma: diagonalization in Ramsey}
    Let $O\s \prod_{n<\omega}\kappa_n$ be $\UEP$-open and $p\in \mathbb{P}(\vec{\mathcal{U}})$. There is $p^*\leq^* p$ such that if there is $q\leq p^*$ with $\mathcal{N}_q\s O$ then $\mathcal{N}_{w(p,q)}\s O$.\footnote{Here $w(p,q)$ is the notation used in \cite{PartI} to refer to the $\leq$-least condition $r\in \mathbb{P}(\vec{\mathcal{U}})$ such that $q\leq^* r\leq p$.}
\end{lemma}
\begin{proof}
  We define a $\leq^*$-decreasing sequence of conditions $\langle p_n\mid n<\omega\rangle$ as follows. 
  
  Set $p_0:=p$. Suppose that $p_n$ has been defined. For each stem $t\in \prod_{i=\ell(p)}^{\ell(p)+n} {A}^{p^n}_i$ is there is  $q\leq^*p_n\cat t$ such that $\mathcal{N}_q\s O$ then we let $\vec{A}^{n,t}$ be the measure one sets of some of such $q$. Otherwise, we  let $\vec{A}^{n,t}:=\vec{A}^{p_n}\restriction [\ell(p)+n,\omega)$. We complete the induction step taking 
  $$A^{p_{n+1}}_i:=A_i^{p_n}\cap \{A^{n,t}_i\mid t\in \prod_{i=\ell(p)}^{\ell(p)+n}{A}^{p_n}_i\,\text{and $|t|=n$}\}\;\;\text{for all $i\geq \ell(p)+n+1$}$$
and setting $p_{n+1}$ be the condition with the same stem as $p$, the measure one sets of $A^{p_n}$ up to coordinate $\ell(p)+n$ and $A^{p_{n+1}}_i$ onwards. After this process we have constructed a $\leq^*$-decreasing sequence $\langle p_n\mid n<\omega\rangle$ and we take $p^*$ to be a $\leq^*$-lower bound for it. It is routine to check that $p^*$ has the intended properties.
\end{proof}

The next lemma is the $\mathbb{P}(\vec{\mathcal{U}})$-analogue of the classical Galvin--Prikry lemma \cite{GalvinPrikry} asserting that dense open sets are completely Ramsey:

\begin{lemma}\label{lemma: dense open sets are Ramsey}\label{lemma: Open sets are completely ramsey}
    Let $O\s \prod_{n<\omega}\kappa_n$ be an open set in the $\vec{\mathcal{U}}$-$\textsf{EP}$ topology. Then, for each $p\in \mathbb{P}(\vec{\mathcal{U}})$ there is a condition $q\leq^* p$ such that either $\mathcal{N}_q\s O$ or $\mathcal{N}_q\cap O=\emptyset$.

    In addition, if $O$ is dense in $\UEP$, there is $q\leq^* p$ such that $\mathcal{N}_q\s O$.
\end{lemma}
\begin{proof}
 Let $p\in \mathbb{P}(\vec{\mathcal{U}})$ and apply Lemma~\ref{lemma: diagonalization in Ramsey} to find a condition $p_0\leq^* p$ such that  if  $q\leq p_0$ is such that $\mathcal{N}_q\s O$ then $\mathcal{N}_{w(p_0,q)}\s O$. If there is $q\leq^* p_0$ such that $\mathcal{N}_q\s O$ then we take it. So, suppose there is no such a pure extension $q$.

 \smallskip

 Just to simplify notations let us agree that the length of $p_0$ is zero.

 \begin{claim}
    The set $\{\alpha\in A^{p_0}_0\mid \exists q\leq^* p_0\cat \langle \alpha\rangle\,\wedge\, \mathcal{N}_q\s O\}$ is not $\mathcal{U}_0$-large.
 \end{claim}
 \begin{proof}[Proof of claim]
   Suppose it is $\mathcal{U}_0$-large and call it $X$. Then, for each $\alpha\in X$ we find $q(\alpha)\leq^* p_0\cat \langle \alpha\rangle$ such that $\mathcal{N}_{q(\alpha)}\s O$. Let us define an auxiliary condition $a\leq^* p_0$ that has as a first measure one set $X$, and for coordinates $i\geq 1$, $$\textstyle A^{a}_i = \bigcap_{\alpha <\kappa_0} A^{q(\alpha)}_i.$$

  Let $x\in \mathcal{N}_a$. Then, $x(i)\in A^a_i$ for all $i\geq 1$. In particular, $x(i)\in A^{q(x(0))}_i$ for all $i\geq 1$. This is equivalent to saying that  $x\in  \mathcal{N}_{q(x(0))}\s O$. Ergo, $\mathcal{N}_a\s O$. This is a contradiction as we were assuming that no $a\leq^* p_0$ had such property.
 \end{proof}
Let $A^{p_1}_0$ be the complement of the set displayed in the above lemma. 
 Let $p_1$ be the $\leq^*$-extension of $p_0$ obtained after shrinking the first measure one set of $p_0$ to $A^{p_1}_0$ and leave the other measure one sets untouched.

   \begin{claim}
   The following set is  $\mathcal{U}_0$-large: 
      $$\{\alpha\in A^{p_1}_0\mid \{\beta\in A^{p_0}_1\mid \alpha<\beta\,\wedge\, \exists q\leq^* p_1\cat \langle \alpha,\beta\rangle \,\wedge\,\mathcal{N}_q\s O\}\notin \mathcal{U}_1\}.$$
   \end{claim}
   \begin{proof}[Proof of claim]
      Suppose that this was not the case. Then, for $\mathcal{U}_0$-many $\alpha$'s the set $\{\beta\in A^{p_0}_1\mid \alpha<\beta\,\wedge\, \exists q\leq^* p_1\cat \langle \alpha,\beta\rangle \,\wedge\,\mathcal{N}_q\s O\}$ is  $\mathcal{U}_1$-large. For each $\alpha\in A^{p_1}_0$ and $\beta\in A^{p_0}_1$ as above let $q(\alpha,\beta)\leq^* p_1\cat \langle \alpha,\beta\rangle$ witnessing that $\mathcal{N}_{q(\alpha,\beta)}\s O$. As before, let $a$ be the auxiliary condition whose first measure one set is the collection of those $\alpha$'s, the second measure one set is the collection of those $\beta$'s and the rest of measure one sets are obtained intersecting the measure one sets of the $q(\alpha,\beta)$'s.

     Arguing exactly as before one shows that  $\mathcal{N}_a\s O$. Contradiction.
   \end{proof}
   Let $p_2\leq^* p_1$ be the condition whose first measure one set is the collection of those $\alpha$'s and its second measure one set is $\bigcap_{\alpha<\kappa_0} X_{1,\alpha}$ where $X_{1,\alpha}$ denotes the complement of the above $\mathcal{U}_1$-small set. This is well-defined, by $\kappa_0^+$-completeness of $\mathcal{U}_1$. Proceeding in this fashion we obtain a $\leq^*$-decreasing sequence $\langle p_n\mid n<\omega\rangle$ such that for all $t\in \prod_{i=0}^n \vec{A}^{p_n}$, there is no $q\leq^* p_n\cat t$ such that $\mathcal{N}_q\s O$.

   \smallskip

   Let $p^*$ be a $\leq^*$-lower bound for the sequence $\langle p_n\mid n<\omega\rangle$. Note that because $p^*$ is a lower bound of the $p_n$ for no condition $r\leq p^*$, $\mathcal{N}_r \s O$.

   \begin{claim}
     $\mathcal{N}_{p^*}$ is disjoint from $O$. 
   \end{claim}
   \begin{proof}[Proof of claim]
    Suppose otherwise. Since $\mathcal{N}_{p^*}\cap O$ is open an non-empty there is $q\in \mathbb{P}(\vec{\mathcal{U}})$ such that $\mathcal{N}_q\s \mathcal{N}_{p^*}\cap O$. Then, there is $r\leq p^*,q$. But then  $\mathcal{N}_r \s \mathcal{N}_q\s O$, which is a contradiction.     
   \end{proof}

% Now assume that $O$ is, in addition, dense in the $\vec{\mathcal{U}}$-\textsf{EP} topology. By density,  $\mathcal{N}_{p^*}\cap O$ is non-empty -- say, this is witnessed by $x$. Let $q$ be a condition such that $x\in \mathcal{N}_q\s O$. Since $x\in \mathcal{N}_q\cap \mathcal{N}_{p^*}$ there is $r\leq q, p^*$ such that $\mathcal{N}_r\s \mathcal{N}_q\cap \mathcal{N}_{p^*}$. It thus follows that there is an extension $r$ of $p^*$ such that  $\mathcal{N}_r\s O$. But, one again, this contradicts the property of $p^*$. 

    So, we have showed that if $O$ is open then either there is a $\leq^*$-extension $q$ of $p$ such that $\mathcal{N}_q \s O$ or $\mathcal{N}_q\cap O=\emptyset$. Moreover, if $O$ is dense,    the second alternative cannot happen for $O$ will always intersect $\mathcal{N}_q$.
\end{proof}

\begin{lemma}\label{lemma: meager sets are Ramsey null}
    If $N\s \prod_{n<\omega}\kappa_n$ is a $\kappa$-meager set in $\vec{\mathcal{U}}$-$\textsf{EP}$  then for each $p\in \mathbb{P}(\vec{\mathcal{U}})$ there is $q\leq^* p$ such that $\mathcal{N}_q\cap N=\emptyset.$
\end{lemma}
\begin{proof}
Let $\{N_\xi\mid \xi<\kappa\}$ be a collection of $\UEP$-nowhere dense sets with $$\textstyle N=\bigcup_{\xi<\kappa} N_\xi.$$ Denote $O_\xi:=(\prod_{n<\omega}\kappa_n)\setminus N_\xi$. These are $\UEP$-dense open sets.  Fix $p\in \mathbb{P}(\vec{\mathcal{U}})$.

\smallskip

Let $\{I_n\mid n<\omega\}$ be a partition of $\kappa$ such that $|I_n|<\kappa_{\ell(p)+n}$ for all $n<\omega$. We define a $\leq^*$-decreasing sequence of conditions  $\langle p_n\mid n<\omega\rangle$ as follows.

For $n=0$ construe a $\leq^*$-decreasing sequence $\langle q_\xi\mid \xi< |I_0|\rangle$ below $p$ such that $\mathcal{N}_{q_\xi}\s O_\xi$ which  exists by combining  Lemma \ref{lemma: dense open sets are Ramsey} with  $|I_0|<\kappa_{\ell(p)}.$ Let $p_0$ be a $\leq^*$-lower bound for these $q_\xi$'s noticing that $\mathcal{N}_{p_0}\s O_\xi$ for all $\xi<|I_0|.$

Suppose that $p_n$ has been already defined. Let $\langle O_\xi\mid \xi<|I_{n+1}|\rangle$ be an enumeration of the dense open sets listed by indices in $I_{n+1}$. For each stem $t$ in $ \prod_{i=\ell(p)}^{\ell(p)+n+1} A^{p_n}_i$ we define a $\leq^*$-decreasing sequence of conditions $\langle q^t_\xi\mid \xi<|I_{n+1}|\rangle$  $\leq^*$-below $p_n\cat t$ such that $\mathcal{N}_{q^t_\xi}\s O_\xi$. After defining this sequence we let $q^t$ be a $\leq^*$-lower bound for it, which exists precisely because $|I_{n+1}|<\kappa_{\ell(p)+n+1}$. Now let $p_{n+1}\leq^* p_n$ be the condition whose measure one sets in $[\ell(p),\ell(p)+n+1]$ are those of $p_n$ and past that coordinate the measure one sets are the intersection (ranging over all possible $t$'s) of the measure one sets of the $q^t$'s. It is routine to check that $$\text{$\mathcal{N}_{p_{n+1}}\s O_{\xi}$ for all $\xi<|I_{n+1}|.$}$$

After this procedure we have produced a $\leq^*$-decreasing sequence of conditions $\langle p_n\mid n<\omega\rangle$ and we can take $q$ be a $\leq^*$-lower bound for it. Clearly, $q\leq^* p$. By construction $\mathcal{N}_{q}
\s O_\xi$ for all $\xi<\kappa$. Ergo, $\mathcal{N}_{q}\cap N=\emptyset$, as needed.
\end{proof}

\begin{comment}

\begin{lemma}\label{lemma: Open sets are completely ramsey}\ale{\tiny Need to correct. It should say $q\leq^* p$.}
    If  $O\s \prod_{n<\omega}\kappa_n$ is an $\vec{\mathcal{U}}$-$\textsf{EP}$ open set then for each $p\in \mathbb{P}(\vec{\mathcal{U}})$ there is $q\leq p$ such that either  $\mathcal{N}_q\s  O$ or $\mathcal{N}_q\cap O=\emptyset.$
\end{lemma}
\begin{proof}
If there is $q\leq p$ such that $\mathcal{N}_q\s O$ then we are done. So, suppose that this is not the case. Then,  we claim that $\mathcal{N}_p$ is disjoint from $O$.

Towards a contradiction, suppose that $\mathcal{N}_p\cap O$ is non-empty. Let $x\in \mathcal{N}_p\cap O$. Since this intersection is open, there is a condition $q\in \mathbb{P}(\vec{\mathcal{U}})$ such that  $x\in \mathcal{N}_q\s \mathcal{N}_p\cap O$.  Then, there is $r\leq p,q$ such that $x\in \mathcal{N}_r\s \mathcal{N}_q\s O$, thus contradicting our initial assumption.
\end{proof}
\end{comment}

\begin{lemma}\label{lemma: BP implies Ramsey}
    If $A\s \prod_{n<\omega}\kappa_n$ has the $\vec{\mathcal{U}}$-Baire property then for each $p\in \mathbb{P}(\vec{\mathcal{U}})$ there is $q\leq^* p$ such that either $\mathcal{N}_q\s A$ or $\mathcal{N}_q\cap A=\emptyset$.
\end{lemma}
\begin{proof}
Let $O$ and $N$ be, respectively, open and meager in the $\vec{\mathcal{U}}$-\textsf{EP} topology such that $A\triangle O\s N$. Fix $p\in \mathbb{P}(\vec{\mathcal{U}})$. By Lemma~\ref{lemma: meager sets are Ramsey null} there is $q\leq^* p$ such that $\mathcal{N}_q\cap N=\emptyset$. By Lemma~\ref{lemma: Open sets are completely ramsey} there is $r\leq^* q$ such that either $\mathcal{N}_r\s O$ or  $\mathcal{N}_r\cap O=\emptyset$. Note that $\mathcal{N}_r\s \mathcal{N}_q$ therefore it is disjoint from $N$ as well. This implies that either $\mathcal{N}_r\s A$ or $\mathcal{N}_r\cap A=\emptyset$ hold, thus completing the lemma.
\end{proof}

\begin{theorem}
    Let $\mathbb{P}^{\mathrm{diag}}$ be the diagonal Merimovich forcing and $G\s \mathbb{P}^{\mathrm{diag}}$ a generic for it.  Then, in $L(\mathcal{P}(\kappa))^{V[G]}$ every set $A\s \prod_{n<\omega}\kappa_n$ is $\vec{\mathcal{U}}$-completely Ramsey.
\end{theorem}
\begin{proof}
   This follows  combining Lemma~\ref{lemma: BP implies Ramsey} with \cite[Theorem~3.26]{DPT}.
\end{proof}

\subsection{J\'onsson cardinals} Recall that a cardinal $\lambda$ is \emph{J\'onsson} if for every function $F:[\lambda]^{<\omega}\to\lambda$ there is $H\s\lambda$ of order-type $\lambda$ such that $F``[H]^{<\omega}\subsetneq\lambda$
\begin{theorem}\label{thm: Jonsson}
    Suppose $\kappa<\lambda$ are respectively ${<}\lambda$-supercompact and J\'onsson. Then there is a forcing extension $W$ of $V$ such that $\kappa$ is a strong limit of countable cofinality in $W$ and $\HOD^W_{\mathcal{P}(\kappa)}\models``\kappa^+$ is J\'onsson".
\end{theorem}
\begin{proof}
         By doing a suitable preparatory class forcing we may assume that for every set-generic extension $W$ of $V$, $V\s\HOD^W$.\footnote{The idea of the preparation, which is essentially due to McAloon \cite{mcaloon1971consistency}, is that we will make the $\kappa$ and $\lambda$ Laver-indestructible, and then do $\lambda$-directed closed forcing above $\lambda$ to arrange that every set of ordinals is coded unboundedly often into the values of the continuum function.} %We may assume that for every set-generic extension $W$ of $V$, $V\s\HOD^W$ (see the proof of Theorem \ref{thm: def TP}). %Working in $V$, let $\mathbb{P}:=\mathbb{P}(\kappa,\lambda)$ denote the Merimovich's forcing, and let $\langle\mathbb{P}_d, \pi_{e,d}\colon \mathbb{P}_e\rightarrow\mathbb{P}_d\mid d,e\in \mathcal{D}\ \wedge\ d\sle e\rangle$ be its associated sequence of forcings and projections. 
        Let $G$ be $\mathbb{P}$-generic, and let $F:[\lambda]^{<\omega}\to\lambda$ be a function in $\HOD_{\mathcal{P}(\kappa)}^{V[G]}$. Then there is a first-order formula $\varphi(x,y,z,w)$ such that
    $F=\{\langle\vec{\alpha},\beta\rangle\in [\lambda]^{<\omega}\times\lambda\mid V[G]\models \varphi(\vec{\alpha},\beta,\gamma,x)\},$ for some parameters $\gamma\in\ord$ and $x\in\mathcal{P}(\kappa)^{V[G]}$. By the $\kappa$-capturing property, there is $d\in \mathcal{D}^\ast$ with $x\in V[\pi_d``G]$, so that $$F=\{\langle\vec{\alpha},\beta\rangle\in [\lambda]^{<\omega}\times\lambda\mid \exists p\in G\; p\Vdash^{V[G]}_{\mathbb{P}}\varphi(\check{\vec{\alpha}},\check{\beta},\check{\gamma},\dot{x})\},$$ where $\dot{x}$ is a $\mathbb{P}_d$-name for $x$. By Lemma \ref{lem: hom of the poset}, $$F=\{\langle\vec{\alpha},\beta\rangle\in [\lambda]^{<\omega}\times\lambda\mid \exists p\in \pi_d``G\; p\Vdash^{V}_{\mathbb{P}}\varphi(\check{\vec{\alpha}},\check{\beta},\check{\gamma}, \dot{x})\},$$ yielding $F\in V[\pi_d``G]$. Since $|\mathbb{P}_d|<\lambda$, $\lambda$ is J\'onsson in $V[\pi_d``G]$. So there is $H\in\mathcal{P}(\lambda)^{V[\pi_d``G]}$ of order-type $\lambda$ such that $F``[H]^{<\omega}\subsetneq\lambda$. To finish we have to verify that $H\in\HOD_{\mathcal{P}(\kappa)}^{V[G]}$. In fact, by assumption we have that $V\s\HOD^{V[G]}\s\HOD_{\mathcal{P}(\kappa)}^{V[G]}$. On the other hand, Lemma \ref{coding into subsets of kappa} leads to $\pi_d``G\in L(\mathcal{P}(\kappa))^{V[G]}\s\HOD_{\mathcal{P(\kappa)}}^{V[G]}$. As a result $H\in V[\pi_d``G]\s\HOD_{\mathcal{P}(\kappa)}^{V[G]}$. %As argued in the proof of Theorem \ref{thm: def TP}, we conclude that $H\in \HOD_{\mathcal{P}(\kappa)}^{V[G]}$.
\end{proof}
\begin{remark}\hfill
\begin{enumerate}
    \item Assuming $\lambda$ is Ramsey, the same proof strategy of Theorem \ref{thm: Jonsson} yields the consistency of $\HOD_{\mathcal{P}(\kappa)}\models``\kappa^+$ is Ramsey".
    \item With a similar argument, Gitik and Merimovich \cite{GitMer} showed that, under suitable large cardinal assumptions, there are models $W_0$ and $W_1$ of $\zfc$ such that $\HOD^{W_0}_{\mathcal{P}(\kappa)}\models``\kappa^+$ is measurable" and $\HOD^{W_1}_{\mathcal{P}(\kappa)}\models\axiomfont{TP}_{\kappa^+}$. 
\end{enumerate}
    
\end{remark}

\subsection{Measurability}
In this section we assume that $\lambda$ is a measurable cardinal above $\kappa$ (as witnessed by $U$) and that $\mathbb{P}$ is the corresponding Merimovich's forcing. 

In \cite{AIM} Cummings et al. showed that a successor of a singular cardinal $\kappa$ can be measurable in $\HOD_x$ for every $x\s \kappa$. The large cardinal assumptions of the main result of \cite{AIM} were later improved by Gitik--Merimovich \cite{GitMer} who utilized Merimovich's forcing to establish the same result. Here we show that a similar configuration can be obtained inside a model that contains $\mathcal{P}(\kappa)$ of the universe. Specifically, we prove the following:
    \begin{theorem}\label{thm: measurability}
        $\kappa^+$ is measurable in $L(\mathcal{P}(\kappa)^{V[G]},U)$.
    \end{theorem}
    \begin{proof}
  Working in  $L(\mathcal{P}(\kappa)^{V[G]},U)$ define the filter $$U^\ast:=\{Y\subseteq\lambda\mid\exists X\in U\, (X\s Y)\}.$$ For each domain $d\in\mathcal{D}^\ast$ define $U_d:=\{Y\in\mathcal{P}(\lambda)^{V[\pi_{d}``G]}\mid \exists X\in U\, (X\s Y)\}$.   Note that $V[\pi_{d}``G]\models``U_d\text{ is a measure on $\lambda$''}$ simply because $|\mathbb{P}_d|<\lambda$. We claim that $L(\mathcal{P}(\kappa)^{V[G]},U)\models``U^\ast\text{ is a measure on $\kappa^+$''}$. 

  \begin{claim}
      $U^\ast $ is an ultrafilter in $L(\mathcal{P}(\kappa)^{V[G]},U)$.
  \end{claim}
  \begin{proof}[Proof of claim]
       Let $X\in \mathcal{P}^{V[G]}(\lambda)\cap L(\mathcal{P}(\kappa)^{V[G]},U)$ be with $X\notin U^\ast$. Clearly, $Z:=\lambda\setminus X\in L(\mathcal{P}(\kappa)^{V[G]},U)$ and $Z\s V$. So we may apply Corollary \ref{cor: L(P(kappa))-capturing} to find some $d\in \mathcal{D}^*$ with $Z\in V[\pi_{d}``G]$. By assumption $X\notin U_d$, and since $U_d$ is an ultrafilter it must be the case that $Z\in U_d$, yielding $Z\in U^\ast$.
  \end{proof}

\begin{claim}
 $U^\ast\text{ is $\kappa^+$-complete''}$  in $L(\mathcal{P}(\kappa)^{V[G]}, U)$.  
\end{claim}
\begin{proof}[Proof of claim]
   Fix $\mu<\lambda$, and let $\vec{X}:=\langle X_\alpha\mid\alpha<\mu\rangle\in L(\mathcal{P}(\kappa)^{V[G]}, U)$ be a sequence consisting of elements in $U^\ast$. For each $\alpha<\lambda$, let $d_\alpha\in\mathcal{D}^\ast$ be such that $X_\alpha\in V[\pi_{d_\alpha}``G]$. Since $\mu<\lambda$, $d:=\bigcup_{\alpha<\mu}d_\alpha$ is in $\mathcal{D}^\ast$, and so $\vec{X}\s V[\pi_{d}``G]$. By Corollary \ref{cor: L(P(kappa))-capturing}, $\vec{X}\in V[\pi_{e}``G]$ for some $e\in\mathcal{D}^\ast$ with $d\s e$. Since $V[\pi_{e}``G]\models``U_e\text{ is a measure on $\lambda$''}$ and $\{X_\alpha\mid \alpha<\mu\}\s U_e$, it follows  $\bigcap_{\alpha<\mu}X_\alpha\in U_e\s U^\ast$. 
   
\end{proof}

    \begin{claim}
         $U^\ast\text{ is normal}$  in $L(\mathcal{P}(\kappa)^{V[G]}, U)$.
    \end{claim}
    \begin{proof}[Proof of claim]
     Suppose that $$L(\mathcal{P}(\kappa)^{V[G]}, U)\models``f\colon\lambda\rightarrow\lambda\text{ is a regressive function''.}$$ Then $f\in V[\pi_{d}``G]$ for some $d\in\mathcal{D}^\ast$, which exists by Corollary \ref{cor: L(P(kappa))-capturing}. Since $V[\pi_{d}``G]\models``U_d\text{ is a measure on $\lambda$''}$, it must be the case that $V[\pi_{d}``G]\models``f$ is constant on some $X\in U\s U^\ast$''. 
    \end{proof}
We are done with the proof of the theorem.
\end{proof}

Combining the above with ideas of Ben-Neria--Hayut \cite[\S7]{BenHay} we can prove:

\begin{theorem}
    There is a generic extension $V[G\ast H]$ of $V[G]$ via a $\lambda$-distributive forcing where $\mathrm{Cub}_\lambda\restriction (E^\lambda_{\mathrm{Reg}})^V$ is a $L(\mathcal{P}(\kappa),U)^{V[G\ast H]}$-ultrafilter.

    Moreover, $U^*$ is the set of all $V[G\ast H]$-stationary sets $X\s (E^\lambda_{\mathrm{Reg}})^V$ in $L(\mathcal{P}(\kappa),U)^{V[G\ast H]}$.
\end{theorem}
\begin{proof}
    Let $U^*$ be the measure on $\lambda$ in $L(\mathcal{P}(\kappa),U)^{V[G]}$ identified in Theorem \ref{thm: measurability}. 

    Note that the filter $\mathcal{F}$ generated in $V[G]$ by the set $\{A\cup \mathrm{Sing}\mid A\in U^*\}$ is precisely the filter generated in $V[G]$ by what Ben-Neria--Hayut call $\bar{\mathcal{U}}$. Using the Strong Prikry Property of $\mathbb{P}$ this filter is $\lambda$-complete (see \cite[Lemma 7.2]{BenHay}).

    Now, over $V[G]$, force with the club shooting poset $\mathbb{C}_{\mathcal{F}}$: Conditions are pairs $\langle c, A\rangle$ consisting of a closed bounded subset of $\lambda$ and $A\in \mathcal{F}$, and the ordering is defined by stipulating  $\langle c, A\rangle\leq \langle d, B\rangle$ iff $d\sqsubseteq c$ and $A\s B$ and $c\setminus \max(d)+1\s B$. Crucially, $\mathbb{C}_{\mathcal{F}}$ is $\lambda$-distributive over $V[G]$ (see \cite[Lemma 7.3]{BenHay}) so it does not add new subsets to $\kappa$. This poset introducec a generic club $C\s \lambda$ that diagonalizes $\mathcal{F}$; to wit, given any $A\in \mathcal{F}$, $C\s^* A$. In particular, $C\cap (E^\lambda_{\mathrm{Reg}})\s^* B$ for all $B\in U^*$.

    \smallskip

    Working in $V[G\ast H]$ we have 
    $$L(\mathcal{P}(\kappa))^{V[G\ast H]}=L(\mathcal{P}(\kappa))^{V[G]}.$$
   So $U^*$ is still a measure on $\lambda$ therein.

   \smallskip

    Given $X\in L(\mathcal{P}(\kappa))^{V[G\ast H]}$ a subset of $(E^\lambda_{\mathrm{Reg}})^V$ we have that either $X\in U^*$ or $\lambda\setminus X\in U^*$. Since $(E^\lambda_{\mathrm{Reg}})^V\in U\s U^*$ it follows that either $X\in U^*$ or $(E^\lambda_{\mathrm{Reg}})^V\setminus X\in U^*$. As a result, $C\cap (E^\lambda_{\mathrm{Reg}})^V$ is almost contained in  either $X$ or $(E^\lambda_{\mathrm{Reg}})^V\setminus X$. This proves the claim about the ultrafilter.

    The moreover claim is proved analogously.
   \end{proof}
\begin{cor}
    The following properties hold in $L(\mathcal{P}(\kappa)^{V[G]}, U)$:
    \begin{enumerate}
        \item Every subset of ${}^\omega \kappa$ has the $\kappa$-$\psp$.
        \item There are no scales at $\kappa$.
        \item Both $\sch_\kappa$ and $\diamondsuit_{\kappa^+}$ fail.
        \item There is no $\kappa^+$-sequence consisting of distinct members of $\mathcal{P}(\kappa)$.
        %\item Both $\ads_\kappa$ and $\mathrm{NPT}(\kappa^+,\aleph_1)$ fail.
        \item $\AP_\kappa$ fails.
        \item  $\kappa\xrightarrow[]{\mathrm{OD}} (\omega)^\omega_{V_\mu}$ holds for all $\mu<\kappa$.
        \item $\kappa^+$ is measurable.
        \item The tree property at $\kappa^+$ holds.
    \end{enumerate}
\end{cor}
\begin{proof}
    (1) It follows from the same argument given in the proof of Theorem \ref{PSP in Solovay}. The reason is that $U$ is a parameter in the ground model $V$.

    (2)--(4) Follow from (1) (see Proposition \ref{thm: combinatorics after psp}).

    (5) and (6) Follow from the very same arguments given in Theorems \ref{thm: not ap} and \ref{thm: kafkoulis} because $U$ is a parameter in $V$.

    (7) This is Theorem \ref{thm: measurability}.

    (8) This is a standard argument. See for instance \cite[Theorem 4]{ShiTrang}.
\end{proof}

\section{Open Questions}\label{sec: open questions}

In \cite{ShiTrang}, Shi--Trang established most of the combinatorial properties considered in this paper from the assumption of $I_0(\kappa)$. In fact, the crucial ingredient in many of their arguments is the consequence of $I_0(\kappa)$ that $\kappa^+$ is measurable in $L(\mathcal{P}(\kappa))$. In the present work, we have derived the corresponding combinatorial properties from the assumption of $\kappa$-$\psp$. This naturally leads to the following question.
\begin{question}
    What is the relation between:
    \begin{enumerate}
        \item $\kappa^+$ is measurable.
        \item Every subset of ${}^\omega \kappa$ has the $\kappa$-$\psp$. 
    \end{enumerate}
\end{question}
Another interesting combinatorial property whose status in our model remains unclear is stationary reflection at $\kappa^+$, namely $\refl(\kappa^+)$. In \cite{ShiTrang}, the authors established this property in $L(\mathcal{P}(\kappa))$ from a strengthening of $I_0(\kappa)$ implying that
$$
L(\mathcal{P}(\kappa))\models \text{``$\kappa^+$ is $\mathcal{P}_{\kappa^+}(\mathcal{P}(\kappa))$-supercompact.''}
$$
In the case $\kappa=\omega$, Trang showed that if $\kappa$ is supercompact, then forcing with $\Col(\omega,{<}\kappa)$ yields a model in which
$
L(\mathcal{P}(\mathbb{R}))
$
satisfies that $\omega_1$ is $\mathbb{R}$-supercompact. We do not know how to obtain an analogous conclusion in our setting, since Trang's argument relies on lifting supercompactness embeddings through the Lévy collapse. This leads naturally to the following question.
\begin{question}Working in $V[G]$:
\begin{enumerate}
    \item Does $L(\mathcal{P}(\kappa))\models ``\refl(\kappa^+)$ holds''?
    \item  Can $\kappa^+$ be $\mathcal{P}_{\kappa^+}(\mathcal{P}(\kappa))$ supercompact in $L(\mathcal{P}(\kappa))$?
\end{enumerate}
\end{question}
Another natural question related to our findings on partition relations is:
\begin{question}
    Does $``\kappa\to  (\omega)^\omega_{V_\mu}$ holds for all $\mu<\kappa$'' hold in $L(\mathcal{P}(\kappa))^{V[G]}$?
\end{question}
More specifically, the question is whether one can find homogeneous sets for colorings that are definable from parameters in $\mathcal{P}(\kappa)$. This appears to be substantially more difficult, since the construction of such homogeneous sets seems to require modifying some of the Prikry sequences relative to a non-normal measure
\[
U_\alpha=\{X\subseteq\kappa\mid \alpha\in j(X)\}
\]
for $\alpha>\kappa$. The difficulty is that the Prikry sequences added by $\mathbb{P}$ are not independent of one another, so altering one sequence typically necessitates corresponding modifications to several others as well.

%\begin{question}[Asperó]
   % Is it consistent that many/all cardinals $\kappa$ of countable cofinality satisfy that $L(\mathcal{P}(\kappa))$ is a $\kappa$-Solovay model? Could some form of Radin forcing be useful towards that sort of ``Solovay maximum''?
%\end{question}
%\begin{question}
 %   What is the exact consistency strength of $$``\zf+\exists\kappa=\beth_\kappa\, (\dc_\kappa\ \wedge\ \cf(\kappa)=\omega\ \wedge\ \neg\AP_{\kappa})\text{''?}$$
%\end{question}
%Since the club filter is not an ultrafilter,\seba{prove or remark this} the following questions are challenging. 
%\begin{question}
%    Is $\kappa^+$ measurable in $\kappa$-Solovay models?
%\end{question}

\smallskip

Finally, a question that the current method is not able to address is:
\begin{question}
   Is $\HOD_{\mathcal{P}(\aleph_\omega)}\models``\aleph_{\omega+1} \text{ is measurable"}$ consistent? Similarly, can $\aleph_{\omega+1}$ be measurable in $L(\mathcal{P}(\aleph_\omega))$?
\end{question}

\medskip

\emph{\small Acknowledgments. This work was partially supported by the Simons Foundation grant (award no. SFI-MPS-T-Institutes-00010825) and from State Treasury funds as part of a task commissioned by the Minister of Science and Higher Education under the project “Organization of the Simons Semesters at the Banach Center - New Energies in 2026-2028” (agreement no. MNiSW/2025/DAP/491). The second author's work is funded by the National Science Center, Poland under the Weave-Unisono Call, registration number UMO-2023/05/Y/ST1/00194.}

\begin{center}   
        \includegraphics[width=0.8\linewidth]{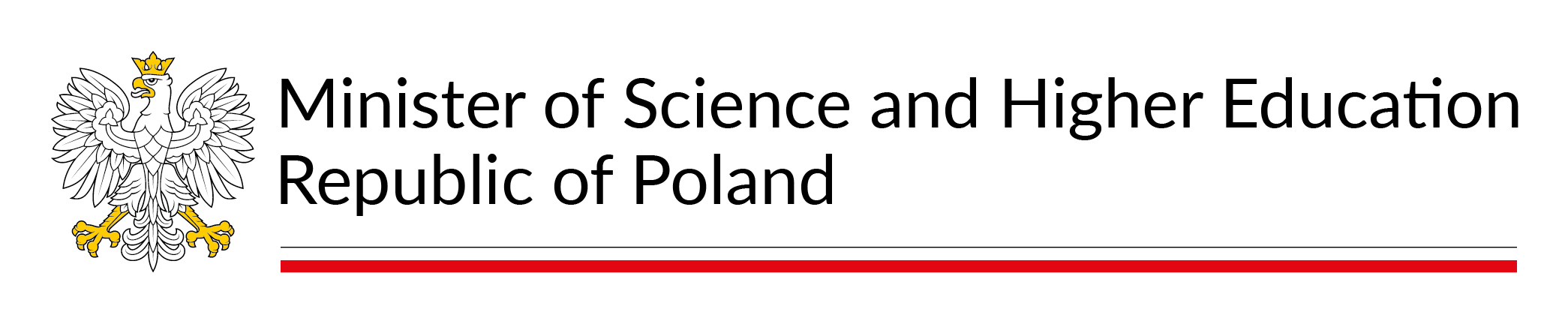}
       % \caption{Caption}
        %\label{fig:enter-label}
    \end{center}

    \begin{center}   
        \includegraphics[width=0.8\linewidth]{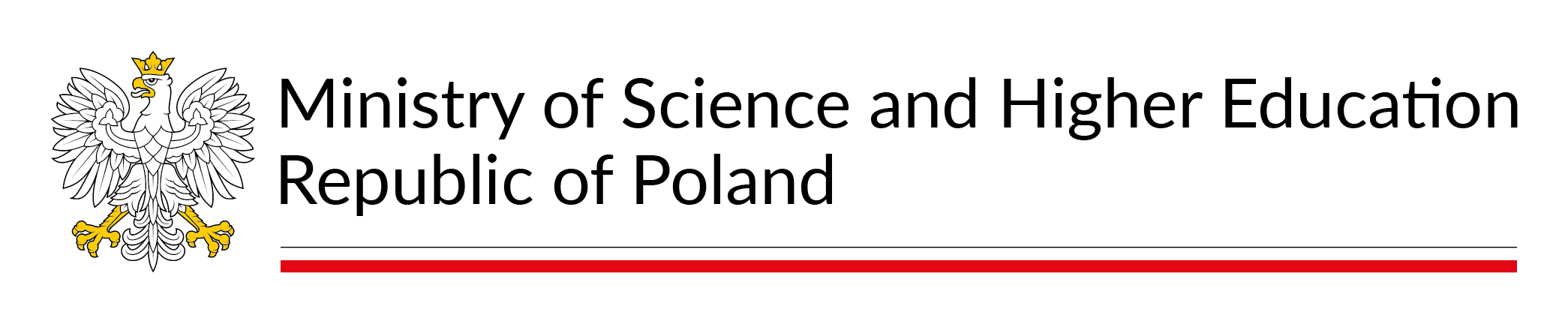}
       % \caption{Caption}
        %\label{fig:enter-label}
    \end{center}

\bibliographystyle{alpha} 
\bibliography{biblio}

\newcommand{\etalchar}[1]{$^{#1}$}
\begin{thebibliography}{CFM{\etalchar{+}}18b}

\bibitem[AM10]{Abraham2010}
Uri Abraham and Menachem Magidor.
\newblock {\em Cardinal Arithmetic}, pages 1149--1227.
\newblock Springer Netherlands, Dordrecht, 2010.

\bibitem[BNH23]{BenHay}
Omer Ben-Neria and Yair Hayut.
\newblock On-strongly measurable cardinals.
\newblock In {\em Forum of Mathematics, Sigma}, volume~11, page e19. Cambridge
  University Press, 2023.

\bibitem[CFM01]{CumForMag}
James Cummings, Matthew Foreman, and Menachem Magidor.
\newblock Squares, scales and stationary reflection.
\newblock {\em Journal of Mathematical Logic}, 1(01):35--98, 2001.

\bibitem[CFM{\etalchar{+}}18a]{CummingsAP}
James Cummings, Sy-David Friedman, Menachem Magidor, Assaf Rinot, and Dima
  Sinapova.
\newblock The eightfold way.
\newblock {\em The Journal of Symbolic Logic}, 83(1):349--371, 2018.

\bibitem[CFM{\etalchar{+}}18b]{AIM}
James Cummings, Sy-David Friedman, Menachem Magidor, Assaf Rinot, and Dima
  Sinapova.
\newblock Ordinal definable subsets of singular cardinals.
\newblock {\em Israel J. Math.}, 226(2):781--804, 2018.

\bibitem[Cra15]{Cramer}
Scott~S. Cramer.
\newblock Inverse limit reflection and the structure of {$L(V_{\lambda+ 1})$}.
\newblock {\em Journal of Mathematical Logic}, 15(01):1550001, 2015.

\bibitem[DIL23]{DimonteIannella}
Vincenzo Dimonte, Martina Iannella, and Philipp L{\"u}cke.
\newblock Descriptive properties of {$I_2$}-embeddings.
\newblock {\em arXiv preprint arXiv:2311.00376}, 2023.

\bibitem[Dim18a]{VD}
Vincenzo Dimonte.
\newblock Generic {$I_0$} at {$\aleph_\omega$}.
\newblock {\em MLQ Math. Log. Q.}, 64(1-2):118--132, 2018.

\bibitem[Dim18b]{DimonteRankIntoRank}
Vincenzo Dimonte.
\newblock I0 and rank-into-rank axioms.
\newblock {\em Bollettino dell'Unione Matematica Italiana}, 11(3):315--361,
  2018.

\bibitem[DMRS]{DimonteMottoShi}
Vincenzo Dimonte, Luca Motto~Ros, and Xianghui Shi.
\newblock Generalized descriptive set theory at singular cardinals {II}.
\newblock {\em In preparation}.

\bibitem[DPT24]{DPT}
Vincenzo Dimonte, Alejandro Poveda, and Sebastiano Thei.
\newblock The {B}aire and {P}erfect {S}et properties at singular cardinals.
\newblock {\em Submitted}, 2024.

\bibitem[DR25]{DimMot}
Vincenzo Dimonte and Luca~Motto Ros.
\newblock Generalized descriptive set theory at singular cardinals of countable
  cofinality.
\newblock {\em arXiv preprint arXiv:2511.16188}, 2025.

\bibitem[EH73]{ErdosHechler}
P~Erdos and S~Hechler.
\newblock On maximal almost-disjoint families over singular cardinals.
\newblock In {\em Colloquia Mathematica Societatis J{\'a}nos Bolyai},
  volume~10, pages 597--604, 1973.

\bibitem[Eis10]{MR2768694}
Todd Eisworth.
\newblock Successors of singular cardinals.
\newblock In {\em Handbook of set theory. {V}ols. 1, 2, 3}, pages 1229--1350.
  Springer, Dordrecht, 2010.

\bibitem[Ell74]{Ellentuck}
Erik Ellentuck.
\newblock A new proof that analytic sets are {R}amsey.
\newblock {\em The Journal of Symbolic Logic}, 39(1):163--165, 1974.

\bibitem[FHK14]{Friedman}
Sy-David Friedman, Tapani Hyttinen, and Vadim Kulikov.
\newblock {\em Generalized descriptive set theory and classification theory},
  volume 230.
\newblock American Mathematical Society, 2014.

\bibitem[Git10]{Gitik-handbook}
Moti Gitik.
\newblock Prikry-type forcings.
\newblock In {\em Handbook of set theory. {V}ols. 1, 2, 3}, pages 1351--1447.
  Springer, Dordrecht, 2010.

\bibitem[GK09]{GitikKrueger}
Moti Gitik and John Krueger.
\newblock Approachability at the second successor of a singular cardinal.
\newblock {\em The Journal of Symbolic Logic}, 74(4):1211--1224, 2009.

\bibitem[GM18]{GitMer}
Moti Gitik and Carmi Merimovich.
\newblock Some applications of supercompact extender based forcings to hod.
\newblock {\em The Journal of Symbolic Logic}, 83(2):461--476, 2018.

\bibitem[GP73]{GalvinPrikry}
Fred Galvin and Karel Prikry.
\newblock Borel sets and {R}amsey's theorem.
\newblock {\em The Journal of Symbolic Logic}, 38(2):193--198, 1973.

\bibitem[JL25]{JakobLevine}
Hannes Jakob and Maxwell Levine.
\newblock Failure of approachability at the successor of the first singular for
  any cofinality.
\newblock {\em arXiv preprint arXiv:2503.18898}, 2025.

\bibitem[JP25]{JakobPoveda}
Hannes Jakob and Alejandro Poveda.
\newblock On {S}helah's {A}pproachability {I}deal.
\newblock {\em arXiv preprint arXiv:2508.04374}, 2025.

\bibitem[Kaf94]{Kafkoulis1994-KAFTCS}
George Kafkoulis.
\newblock The consistency strength of an {I}nfinitary {R}amsey property.
\newblock {\em Journal of Symbolic Logic}, 59(4):1158--1195, 1994.

\bibitem[Kan09]{Kan}
Akihiro Kanamori.
\newblock {\em The higher infinite}.
\newblock Springer Monographs in Mathematics. Springer-Verlag, Berlin, second
  edition, 2009.
\newblock Large cardinals in set theory from their beginnings, Paperback
  reprint of the 2003 edition.

\bibitem[Kec12]{Kec}
Alexander Kechris.
\newblock {\em Classical descriptive set theory}, volume 156.
\newblock Springer Science \& Business Media, 2012.

\bibitem[KW09]{KoelWoo}
Peter Koellner and W~Hugh Woodin.
\newblock Large cardinals from determinacy.
\newblock In {\em Handbook of set theory}, pages 1951--2119. Springer, 2009.

\bibitem[Mat68]{MathiasRamsey}
A.~R.~D. Mathias.
\newblock On a generalization of {R}amsey's theorem.
\newblock {\em Notices of the American Mathematical Society}, 15:931, 1968.
\newblock Abstract 68T-E19, received July 18, 1968.

\bibitem[Mat77]{HappyFamilies}
Adrian~RD Mathias.
\newblock Happy families.
\newblock {\em Annals of Mathematical logic}, 12(1):59--111, 1977.

\bibitem[McA71]{mcaloon1971consistency}
Kenneth McAloon.
\newblock Consistency results about ordinal definability.
\newblock {\em Annals of Mathematical Logic}, 2(4), 1971.

\bibitem[Mer11]{MerSuper}
Carmi Merimovich.
\newblock Supercompact extender based {P}rikry forcing.
\newblock {\em Archive for Mathematical Logic}, 50, 2011.

\bibitem[Mer17]{SupercompatRadinExtender}
Carmi Merimovich.
\newblock Supercompact extender based {M}agidor--{R}adin forcing.
\newblock {\em Annals of Pure and Applied Logic}, 168(8):1571--1587, 2017.

\bibitem[PRS19]{PartI}
Alejandro Poveda, Assaf Rinot, and Dima Sinapova.
\newblock Sigma-{P}rikry forcing {I}: {T}he {A}xioms.
\newblock {\em Canadian Journal of Mathematics}, pages 1--38, 2019.

\bibitem[Rin10]{RinotAP}
Assaf Rinot.
\newblock A relative of the approachability ideal, diamond and non-saturation.
\newblock {\em The Journal of Symbolic Logic}, 75(3):1035--1065, 2010.

\bibitem[She79]{Sh:108}
Saharon Shelah.
\newblock On successors of singular cardinals.
\newblock In {\em Logic Colloquium '78 (Mons, 1978)}, volume~97 of {\em Stud.
  Logic Foundations Math}, pages 357--380. North-Holland, Amsterdam-New York,
  1979.

\bibitem[She94]{Sh:g}
Saharon Shelah.
\newblock {\em {Cardinal arithmetic}}, volume~29 of {\em Oxford Logic Guides}.
\newblock The Clarendon Press, Oxford University Press, New York, 1994.

\bibitem[Shi15]{Shi}
Xianghui Shi.
\newblock Axiom {$I_0$} and higher degree theory.
\newblock {\em The Journal of Symbolic Logic}, 80(3):970--1021, 2015.

\bibitem[Sil70]{SilverAnalytic}
Jack Silver.
\newblock Every analytic set is {R}amsey.
\newblock {\em The Journal of Symbolic Logic}, 35(1):60--64, 1970.

\bibitem[Sol70a]{Sol70}
Robert Solovay.
\newblock A model of set theory in which every set of reals is lebesgue
  measurable.
\newblock {\em Annals of Mathematics}, 1970.

\bibitem[Sol70b]{solovay1970model}
Robert~M Solovay.
\newblock A model of set-theory in which every set of reals is {L}ebesgue
  measurable.
\newblock {\em Annals of Mathematics}, pages 1--56, 1970.

\bibitem[ST17]{ShiTrang}
Xianghui Shi and Nam Trang.
\newblock {$I_0$} and combinatorics at {$\lambda^+$}.
\newblock {\em Archive for Mathematical Logic}, 56(1):131--154, 2017.

\bibitem[T{\"o}r18]{Tornquist}
Asger T{\"o}rnquist.
\newblock Definability and almost disjoint families.
\newblock {\em Advances in Mathematics}, 330:61--73, 2018.

\bibitem[Woo11]{WoodinPartII}
W~Hugh Woodin.
\newblock Suitable extender models {II}: beyond {$\omega$}-huge.
\newblock {\em Journal of Mathematical Logic}, 11(02):115--436, 2011.

\end{thebibliography}

\end{document}